\documentclass[12pt]{extarticle}
\usepackage{amsmath}
\usepackage{amssymb}
\usepackage{stmaryrd}

\usepackage{ wasysym }
\usepackage{color}

              
\oddsidemargin -20pt    
\evensidemargin -20pt
\marginparwidth 50pt 
\marginparsep 5pt       

\topmargin -80pt           


\textheight 715pt        
\textwidth 520pt         


   \newtheorem{theorem}{Theorem}[section]
   \newtheorem{lemma}[theorem]{Lemma}
   \newtheorem{proposition}[theorem]{Proposition}
   \newtheorem{corollary}[theorem]{Corollary}

\newtheorem{conjecture}{Conjecture}

\newcommand{\ben}{\begin{enumerate}}
\newcommand{\een}{\end{enumerate}}

\newcommand{\bt}{\begin{theorem}}
\newcommand{\et}{\end{theorem}}
\newcommand{\bl}{\begin{lemma}}
\newcommand{\el}{\end{lemma}}
\newcommand{\bc}{\begin{corollary}}
\newcommand{\ec}{\end{corollary}}
\newcommand{\bp}{\begin{proposition}}
\newcommand{\ep}{\end{proposition}}
\newcommand{\br}{\begin{remark}}
\newcommand{\er}{\end{remark}}

\newcommand{\bpf}{\begin{proof}}
\newcommand{\epf}{\end{proof}}

\newcommand{\be}{\begin{equation}} 
\newcommand{\ee}{\end{equation}}
\newcommand{\beq}{\begin{eqnarray}}
\newcommand{\eeq}{\end{eqnarray}}
\newcommand{\ba}{\begin{array}}
\newcommand{\ea}{\end{array}}
\newcommand{\bi}{\begin{itemize}}
\newcommand{\ei}{\end{itemize}}

\newcommand{\comm}[1]{}

\newcommand \qed {\hskip 6pt\vrule height6pt width5pt depth1pt \bigskip}






\newfont{\msbm}{msbm10 scaled\magstep1}
\newfont{\msbms}{msbm7 scaled\magstep1} 



   \newenvironment{proof}[1][Proof]{\begin{trivlist}
   \item[\hskip \labelsep {\bfseries #1}]}{\end{trivlist}}

   \newenvironment{remark}[1][Remark]{\begin{trivlist}
   \item[\hskip \labelsep {\bfseries #1}]}{\end{trivlist}}

   \comm{\newcommand{\qed}{\nobreak \ifvmode \relax \else
      \ifdim\lastskip<1.5em \hskip-\lastskip
      \hskip1.5em plus0em minus0.5em \fi \nobreak
      \vrule height0.75em width0.5em depth0.25em\fi}}



 \numberwithin{equation}{section}
 


\newcommand{\tr}{  \textrm{tr\ }  }


\begin{document}

\title{The Howland-Kato Commutator Problem, II}
\author{Richard Froese\\University of British Columbia, Vancouver\and Ira Herbst\\University of Virginia, Charlottesville}

\maketitle

\begin{abstract}
We continue the search, begun by Kato, for
all pairs of real, bounded, measurable functions $\{f,g\}$ that result
in a positive commutator $[if(P),g(Q)]$. We prove a number of partial
results including a connection with Loewner's celebrated
theorem on matrix monotone  functions.
\end{abstract}

\section{Introduction}
This is the second in a series of papers on the Howland-Kato commutator problem.  We refer the reader to the original paper of Kato \cite {TK} and the paper of Herbst and Kriete \cite{HK}. In a paper on an entirely different subject \cite{JSH}, Howland began the subject of this paper when he found a pair of functions,  $f = \tan^{-1}t/2$ and $g=\tanh t$ such that $i[f(P),g(Q)] \ge 0$.  Evidently this influenced Kato's attempt to find \emph{all} pairs of bounded functions $\{f, g\}$ which result in a non-negative commutator.\\
In \cite{TK}, Kato assumed that $f$ and $g$ were real and absolutely continuous with $L^1$ derivatives.  He proved a number of interesting facts with these assumptions. Let $K$ be the commutator $K=[if(P),g(Q)]$.  He showed that if $K=0$ then at least one of $f$ or $g$ is constant.  He showed that $K$ is trace class.  He solved the problem completely in the case when $K$ is rank one.  Specifically, he showed that in this case
$$
f(t) = c_1\tanh \hat\alpha(t-t_1) + d_1,
\quad g(t) = c_2\tanh \alpha(t-t_2) + d_2;
\quad
 $$
where $ \alpha \hat\alpha = \pi/2, \quad c_1c_2 > 0$, and
 $t_1$ and $t_2$ are arbitrary real numbers.  He did this by reducing the problem to a differential equation which he then solved.  Given Kato's solution in the rank one case, by linearity it follows that the functions
\begin{equation} \label{tanhrep}
f(t) = \int \tanh \hat\alpha(t-s)d\mu(s) + d_1, \quad g(t) =  \int \tanh \alpha(t-s)d\nu(s) + d_2
\end{equation}
result in a positive commutator whenever $\alpha \hat{\alpha} = \pi/2$ and $\mu$ and $\nu$ are finite positive measures.  Kato then made the important realization that a bounded real function had the representation which $f$ has in (\ref{tanhrep}) if and only if $f\in K_{\alpha}$, where $K_\alpha$ is the set of bounded, real functions with an analytic continuation to the strip $|\mathrm{Im}z| < \alpha$, satisfying $\mathrm{Im}f(z)\mathrm{Im}z \ge 0$.  If this representation looks unfamiliar in a complex variables setting, think of the Dirichlet problem for the imaginary part of an analytic function in a strip.  This results in a Poisson formula with a hyperbolic function integrated against the imaginary part on the boundary of the strip (see Widder \cite {W}).  In the limit where there is no continuation beyond the strip, the positive imaginary part becomes a finite measure.  Kato felt there was ``some reason to conjecture" \cite{TK} that all pairs of $f$ and $g$ resulting in a positive commutator would be as in (\ref{tanhrep}) (except that we could take $-f$ and $-g$ instead).   We will call this the Kato conjecture but we drop Kato's requirement that $f$ and $g$ be absolutely continuous with $L^1$ derivatives and replace it with the condition that $K \ne 0$.  The reason for this is that $K=0$ is special. As shown in  \cite {HK}, $f$ and $g$ are bounded and measurable with $K=0$ if and only if one of them is almost everywhere constant or they both have periodic versions with periods satisfying $\tau_f \tau_g = 2\pi$.  On the other hand:

\begin{theorem}\label{derivs}
If $f$ and $g$ are real, bounded, and measurable with $K \ge 0$ and $K\ne 0$, then they are both absolutely continuous with $L^1$ derivatives.  
\end{theorem}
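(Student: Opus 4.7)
The plan is to reduce, by a Fourier symmetry, to showing that $g$ is absolutely continuous with an $L^1$ derivative; the corresponding statement for $f$ then follows because $\cF K\cF^{-1}=-[i\tilde g(P),f(Q)]$, with $\tilde g(t)=g(-t)$, is again a positive nonzero commutator of the same form, with $f$ now playing the role that $g$ played before.

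Positivity of $K$ is preserved under conjugation by the translations $U_t=e^{-itP}$ (which fix $f(P)$ and send $g(Q)$ to $g(Q-t)$) and the modulations $V_s=e^{isQ}$ (which fix $g(Q)$ and send $f(P)$ to $f(P-s)$). For positive smooth compactly supported approximate identities $\phi_\eps,\chi_\eps$, the smoothed commutator
\[
K_\eps:=[i(f*\phi_\eps)(P),(g*\chi_\eps)(Q)]=\iint[if(P-s),g(Q-t)]\phi_\eps(s)\chi_\eps(t)\,ds\,dt
\]
is thus a weak integral of unitarily conjugate copies of $K$, so $K_\eps\ge 0$; strong operator convergence $f_\eps(P)\to f(P)$ and $g_\eps(Q)\to g(Q)$ (via the spectral theorem and dominated convergence) gives $K_\eps\to K$ strongly and hence $K_\eps\ne 0$ for small $\eps$, while $f_\eps:=f*\phi_\eps$ and $g_\eps:=g*\chi_\eps$ are smooth and bounded.

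I would then extract monotonicity of $f_\eps$ and $g_\eps$ from $K_\eps\ge 0$ by pairing with Gaussian coherent states $\phi_{x_0,\xi_0}$ at phase-space points $(x_0,\xi_0)$. Because the Weyl symbols of $f_\eps(P)$ and $g_\eps(Q)$ depend only on $\xi$ and on $x$ separately, the Moyal expansion of the symbol of $K_\eps$ has leading term proportional to $f_\eps'(\xi)g_\eps'(x)$, and the coherent-state average takes the form
\[
0\le\langle\phi_{x_0,\xi_0},K_\eps\phi_{x_0,\xi_0}\rangle=c\,(f_\eps'*G)(\xi_0)\,(g_\eps'*G)(x_0)+(\text{higher-order terms}),
\]
for a positive constant $c$ and a Gaussian $G$. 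The dilation $D_\lambda$ intertwines $K$ with another commutator of the same form in $f(\lambda\,\cdot)$ and $g(\,\cdot/\lambda)$, which lets the Gaussian width in $\xi$ versus $x$ be adjusted to isolate the leading term. Positivity then forces $(f_\eps'*G)$ and $(g_\eps'*G)$ to have a common constant sign; sending $\eps\to 0$ and narrowing $G$, the distributional derivatives $f'$ and $g'$ are (after possibly negating both $f$ and $g$) non-negative measures, and boundedness of $f,g$ makes them finite.

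The remaining step is to exclude singular parts in the Lebesgue decomposition of these measures. A jump of $g$ at $x_0$ would, after localisation and translation, reduce to the model problem $[if(P),\chi_{[0,\infty)}(Q)]\ge 0$, which is equivalent to $\mathrm{Im}\langle\phi,H\chi\rangle\le 0$ for all $\phi,\chi\in L^2(0,\infty)$, where $H$ is the Hankel operator with kernel $\hat f(x+y)$; multiplying $\phi$ by $e^{i\theta}$ and varying $\theta$ forces $H=0$, so $\hat f\equiv 0$ on $(0,\infty)$, and by reality of $f$ then $f$ is constant --- contradicting $K\ne 0$. The singular continuous case of $g'$ is the main obstacle: with no clean ``step'' to localise to, I expect a quantitative refinement of the Husimi estimate together with a Lebesgue decomposition argument is needed to show any singular continuous component of $g'$ is incompatible with $K\ne 0$.
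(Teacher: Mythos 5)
Your outline is incomplete in a way that you yourself flag: you have no argument to rule out a singular continuous component in the derivative of $g$, and you concede this is ``the main obstacle.'' That concession means the proposal is not a proof. But beyond that acknowledged hole, there is a second gap that would stop the argument even earlier: the Moyal/coherent-state step to derive monotonicity of $f_\eps,g_\eps$ is heuristic. The claim $0\le\langle\phi_{x_0,\xi_0},K_\eps\phi_{x_0,\xi_0}\rangle = c\,(f_\eps'*G)(\xi_0)(g_\eps'*G)(x_0)+(\text{higher-order})$ does not immediately force a common constant sign: positivity of a product plus uncontrolled error terms does not separate into positivity of the factors, and the ``higher-order'' corrections are not small unless a semiclassical parameter goes to zero, which simultaneously spreads one of the Gaussians and contaminates the leading term. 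Monotonicity is in fact the hardest preliminary here; the paper does not reprove it but imports it (together with continuity of $f,g$ and trace-class membership of $K$) from the earlier Herbst--Kriete paper, and explicitly flags monotonicity as the most difficult of those inputs.

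The paper's actual route is quite different and closes the gap you left open. Starting from trace-class $K$ with eigenexpansion $K=\sum\lambda_i(\phi_i,\cdot)\phi_i$, it invokes Brislawn's theorem on a.e.\ convergence of the symmetrized averages $|C_r|^{-2}\int_{C_r\times C_r}K(x+t,x+s)\,dt\,ds\to\sum\lambda_i|\phi_i(x)|^2$. Comparing the kernel $K$ to the modified kernel $K_1(x,y)=(2\pi)^{-1}[f](g(x)-g(y))/(x-y)$ (which differ by a factor $\widehat{df}(0)/\widehat{df}(y-x)$ close to $1$ near the diagonal), one extracts the a.e.\ limit $k(x)$ of the local averages of the difference quotients of $g$; a dominated-convergence bound involving the Hardy--Littlewood maximal function applied to the $\phi_i$'s (using $\sum\lambda_i\|M\phi_i\|_2^2<\infty$) shows $k\in L^1$ and, by testing against absolutely continuous $\psi$ with $\psi'\in L^1$, that $k$ is the distributional derivative of $g$; finally continuity plus a mollifier argument identifies $g(t)-g(-\infty)=\int_{-\infty}^t k$, which is absolute continuity. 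Nothing in this argument has to separately rule out jumps or a singular continuous part: the a.e.\ existence of the Brislawn limit together with the $L^1$ dominating function $G$ already shows that the distributional derivative is a genuine $L^1$ density, so the singular part is zero automatically.

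Your Hankel-operator idea for excluding jumps is a nice observation, but in the paper's framework jumps are already excluded because continuity of $g$ is one of the imported [HK] results. If you want a self-contained proof along your lines, the two things you would actually need to supply are (i) a rigorous derivation of monotonicity (this is nontrivial; see [HK]) and (ii) an argument against the singular continuous part, and (ii) is precisely what the Brislawn/maximal-function machinery delivers. Without these, the proposal does not establish the theorem.
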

Thus we can prove Kato's assumption if $K\ne 0$. The proof will be given in Section 2. \\
Since we will refer to it several times in this paper, we spell out the Kato conjecture:
\begin{conjecture}[Kato conjecture]
Given bounded, real, measurable functions, $f$ and $g$ with $K=i[f(P),g(Q)]\ge 0, K\ne 0,$ then  $\pm f$ and $\pm g$ have the representations given in (\ref{tanhrep}) with $\alpha \hat{\alpha} = \pi/2$.  Equivalently, $\pm f$ and $\pm g $ are in $K_{\alpha}$ and $K_{\hat{\alpha}}$ respectively.
\end{conjecture}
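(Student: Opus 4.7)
The plan would be to combine Theorem~\ref{derivs} with a rigidity theorem of Loewner type, turning the operator inequality $K\ge 0$ into a forcing statement for analytic continuation of $f$ and $g$ into strips of the required widths. First, I would apply Theorem~\ref{derivs} to reduce to the case in which $f$ and $g$ are absolutely continuous with $f',g'\in L^1$. In this setting Kato's calculation in \cite{TK} gives an explicit integral-kernel form of $K$ (in momentum or position space) in terms of $f$, $g$, and the Fourier transform of one of them; $K$ is trace class, has a spectral decomposition, and the positivity condition $K\ge 0$ becomes a concrete positive-definiteness statement on the kernel.

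Next, I would extract from $K\ge 0$ a family of finite-dimensional matrix inequalities on the values of $f$ and $g$. For any finite collections of points $\{s_j\}$ in momentum space and $\{t_k\}$ in position space, testing $K$ against sums of suitably chosen wave packets localized near these phase-space points should produce a positive-semidefinite matrix whose entries depend on divided differences of $f$ at the $s_j$ and of $g$ at the $t_k$. The aim is to average over one family (e.g.\ integrating the $g$-dependence against a tanh coming from Kato's rank-one solution, which already lies in $K_{\hat\alpha}$) so as to isolate a pure Loewner-type positivity condition on the divided-difference matrix of $f$ alone.

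The payoff would then come from Loewner's theorem: a bounded real function on $\bbR$ whose divided-difference matrices are positive semidefinite on every finite sample admits an analytic continuation to the upper half-plane with nonnegative imaginary part, i.e.\ is a Pick function. A strip version --- obtained by conformally mapping the strip $|\mathrm{Im}\,z|<\alpha$ to the upper half-plane --- combined with the matrix inequalities above would give $\pm f\in K_{\alpha}$ and, symmetrically, $\pm g\in K_{\hat\alpha}$. The relation $\alpha\hat\alpha=\pi/2$ would be forced by the scaling covariance of the commutator: under $f(\cdot)\mapsto f(\lambda\cdot)$ and $g(\cdot)\mapsto g(\cdot/\lambda)$ the operator $K$ is unchanged, so the admissible strip widths scale dually, leaving the product $\alpha\hat\alpha$ invariant and already pinned to the sharp value $\pi/2$ by Kato's rank-one analysis.

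The hard step is the decoupling in the second paragraph. The inequality $K\ge 0$ intertwines $f$ and $g$ multiplicatively, and isolating a Loewner-type positivity on either function individually without losing information is exactly the subtle point: the set of admissible pairs is far smaller than $K_{\alpha}\times K_{\hat\alpha}$, since the widths must be dual, so any successful decoupling has to respect this duality. This is presumably why the full statement remains a conjecture rather than a theorem. Partial progress is therefore most plausibly obtained either by imposing additional regularity on $f$ or $g$, or by restricting to finite-rank $K$, where the spectral decomposition $K=\sum_k \lambda_k |\psi_k\rangle\langle\psi_k|$ might be analyzed term by term using Kato's rank-one classification, yielding structured candidates for $f$ and $g$ that one then attempts to reassemble into the Pick/tanh representation (\ref{tanhrep}).
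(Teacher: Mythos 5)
You should first be clear about the status of the statement you set out to prove: it is the Kato conjecture itself, and the paper does not prove it. It is stated as a conjecture, and everything in the paper (Theorem~\ref{derivs}, Theorem~\ref{monotone}, Theorem~\ref{expdecay}, Theorem~\ref{finiterank}, and the reduction to entire $f\in K_a$, $g\in K_b$ in Section 6) is explicitly a partial result. So there is no proof in the paper to compare with, and your proposal is not one either; as you concede in your final paragraph, the ``decoupling'' step is missing, and that step is not a technical refinement but the entire content of the problem. Testing $K\ge 0$ against wave packets produces quadratic forms in which $f$ and $g$ enter jointly through the kernel $\frac{g(x)-g(y)}{x-y}\widehat{f'}(y-x)$; averaging the $g$-dependence against a tanh is just another choice of test functions and yields more inequalities of the same mixed type, not a positivity statement for the Loewner (divided-difference) matrix of $f$ alone. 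Note also that the paper's actual use of Loewner theory (Theorem~\ref{monotone}) runs in the opposite direction: matrix monotone functions are used to generate new positive commutators from old ones, not to force $f$ or $g$ into a Pick class. A further gap is that classical Loewner positivity would give analyticity in the cut plane or half-plane, whereas the conjecture demands analyticity with positive imaginary part in a strip of a \emph{specific} width; a strip version needs a kernel positivity adapted to that geometry, and nothing in your construction produces it.

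The duality claim is also not valid as argued. Scaling $f(\cdot)\mapsto f(\lambda\cdot)$, $g(\cdot)\mapsto g(\cdot/\lambda)$ shows only that the product of attainable strip widths is invariant under rescaling; it does not determine its value, and Kato's rank-one computation fixes $\alpha\hat\alpha=\pi/2$ only in the rank-one case. The paper itself shows the numerology cannot be imported by symmetry or by analyticity alone: after Theorem~\ref{finiterank} it exhibits a rank-three, non-positive commutator with $f\in K_{1/2}$, $g\in K_{\pi/2}$ (product $\pi/4$), and it notes that without the finite-rank hypothesis pairs with $\alpha\beta>\pi/2$ occur (Howland's original example), so the sharp product is tied to positivity in a way your scaling argument cannot see. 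If you want to make partial progress along the lines you sketch, the routes already carried out in the paper are the realistic ones: exponential decay hypotheses on $f'$ or $g'$ (Theorem~\ref{expdecay}), finite rank (Theorem~\ref{finiterank}, where analyticity in some strips is obtained but the widths, and hence $\alpha\beta\ge\pi/2$, remain out of reach), or the reduction in Section 6 showing one may assume $f,g$ entire and in some $K_a$, $K_b$ from the outset.
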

The $\pm$ are correlated. Whenever we have a choice we will choose the plus signs.
There are two results we want to discuss which provide evidence for the truth of Kato's conjecture.  The first involves matrix monotone functions whose structure was discovered by Loewner \cite{L}.  These are functions $F$ which have the property that if $I$ is an open interval on which $F$ is defined then for any two self adjoint operators $A$ and $B$ with spectrum in $I$ and which satisfy $A\ge B$, we have $F(A) \ge F(B)$. Loewner showed that $F$ has this monotonicity property for operators on an infinite dimensional Hilbert space if and only if $F$ has an analytic continuation to $I\cup \{\mathrm{Im} z \ne 0\}$ with $\mathrm{Im}F(z)\mathrm{Im}z \ge 0$.  In particular $F$ is a Herglotz function.  We have 

\begin{theorem} \label{monotone}
Suppose $f$ and $g$ are real, bounded, measurable functions with ranges $I$ and $J$ respectively. Suppose $ K = i[f(P),g(Q)] \ge 0, K \ne 0$.  If $F$ and $G$ are matrix monotone functions defined on open intervals containing the closure of $I$ and $J$ respectively, then $i[(F\circ f)(P), (G\circ g)(Q)] \ge 0$.
\end{theorem}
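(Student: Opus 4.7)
The plan is to use Loewner's theorem — which realizes matrix monotone functions as restrictions of Herglotz functions — together with the resolvent identity, so as to express $i[(F\circ f)(P), (G\circ g)(Q)]$ as an integral of manifestly positive operators of the form $YKY^*$. Setting $A = f(P)$ and $B = g(Q)$, these are bounded self-adjoint operators with spectra in $\bar I$ and $\bar J$, and the composition rule for the bounded functional calculus identifies $F(f(P))$ with $F(A)$, similarly for $G(B)$.

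Since $F$ is matrix monotone on an open interval $I_0 \supset \bar I$, Loewner's theorem supplies a Nevanlinna representation
\[
F(x) = a_F + b_F\, x + \int\!\!\left(\frac{1}{s-x} - \frac{s}{1+s^2}\right) d\mu_F(s),
\qquad b_F \ge 0,\ \mu_F \ge 0,
\]
and analyticity of $F$ across $I_0$ forces $\mathrm{supp}\,\mu_F \subset \bbR\setminus I_0$, hence at strictly positive distance $\delta_F > 0$ from $\sigma(A)$. An analogous representation with parameters $(a_G, b_G, \mu_G)$ and separation $\delta_G > 0$ holds for $G$. The constant-in-$A$ pieces of $F(A)$ (and analogous constants in $G(B)$) commute with everything, so only the $b_F A$ and resolvent-integral parts contribute to commutators.

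The key algebraic step uses $[X, Y^{-1}] = -Y^{-1}[X,Y]Y^{-1}$ twice to derive, with $K = i[A,B]$,
\[
i\bigl[(s-A)^{-1}, B\bigr] = (s-A)^{-1}\, K\, (s-A)^{-1},
\qquad
i\bigl[(s-A)^{-1}, (t-B)^{-1}\bigr] = (t-B)^{-1}(s-A)^{-1}\, K\, (s-A)^{-1}(t-B)^{-1}.
\]
Expanding $i[F(A), G(B)]$ bilinearly and substituting these identities yields
\begin{align*}
i[F(A), G(B)] &= b_F b_G K + b_F \int (t-B)^{-1} K (t-B)^{-1} d\mu_G(t) \\
&\quad + b_G \int (s-A)^{-1} K (s-A)^{-1} d\mu_F(s) \\
&\quad + \iint (t-B)^{-1}(s-A)^{-1} K (s-A)^{-1}(t-B)^{-1} d\mu_F(s)\, d\mu_G(t).
\end{align*}

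Since $s, t \in \bbR$ and $A, B$ are self-adjoint, each integrand is of the form $YKY^*$ with $Y$ ranging over $I$, $(t-B)^{-1}$, $(s-A)^{-1}$, or $(t-B)^{-1}(s-A)^{-1}$; as $K \ge 0$, each $YKY^* \ge 0$, and integration against the positive measures $\mu_F, \mu_G$ weighted by $b_F, b_G \ge 0$ preserves positivity, giving $i[(F\circ f)(P), (G\circ g)(Q)] \ge 0$. The only technical nuisance is the rigorous justification of exchanging commutators with Bochner integrals, but the separations $\delta_F, \delta_G > 0$ make $(s-A)^{-1}$ uniformly bounded on $\mathrm{supp}\,\mu_F$ with norm $O(|s|^{-1})$ at infinity, so $\|(s-A)^{-1} K (s-A)^{-1}\| = O((1+s^2)^{-1})$, and the Herglotz integrability $\int (1+s^2)^{-1} d\mu_F(s) < \infty$ gives absolute convergence in operator norm. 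I expect no deep obstacle beyond this routine bookkeeping.
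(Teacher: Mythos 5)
Your proof is correct, and it takes a genuinely different route from the paper's. The paper's argument is dynamical: from $K \ge 0$ one has $\frac{d}{dt}\left(e^{itf(P)}g(Q)e^{-itf(P)}\right) = e^{itf(P)}Ke^{-itf(P)} \ge 0$, hence $e^{itf(P)}g(Q)e^{-itf(P)} \ge g(Q)$ for $t \ge 0$; applying the \emph{defining property} of operator monotonicity (no integral representation needed) and using that $G$ commutes with unitary conjugation gives $e^{itf(P)}G(g(Q))e^{-itf(P)} \ge G(g(Q))$, and differentiating at $t = 0$ yields $i[f(P),(G\circ g)(Q)] \ge 0$; the same step is then repeated with $F$ acting on $f(P)$. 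Your proof instead invokes the full strength of Loewner's theorem via the Nevanlinna representation and the resolvent commutator identity, assembling $i[F(A),G(B)]$ as a norm-convergent integral of manifestly positive terms $YKY^*$. The two approaches buy different things: the paper's is shorter and uses only the qualitative definition of matrix monotonicity, sidestepping all integrability bookkeeping; yours is more explicit and constructive, giving a concrete positive decomposition of the new commutator as an integral of conjugates of the original $K$, which could in principle be exploited to track quantitative information (for instance, the kernel of the new commutator manifestly contains $\bigcap_{s,t}\ker\bigl((s-A)^{-1}(t-B)^{-1}\sqrt{K}\bigr)^*$). Your treatment of convergence is correct: the supports of $\mu_F, \mu_G$ are at positive distance from $\sigma(A), \sigma(B)$, so the resolvents are uniformly bounded with $O(|s|^{-1})$ decay, and $\int(1+s^2)^{-1}\,d\mu_F(s) < \infty$ gives absolute convergence in operator norm, justifying the exchange of Bochner integrals with bounded commutators.
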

Theorem \ref{monotone} will be proved in Section 3.\\
The significance of this result is that if $g\in K_{\alpha}$ then by the Herglotz property of matrix monotone functions, $G\circ{g} \in K_{\alpha}$ and similarly if we replace $g$ and $G$ by $f$ and $F$. Thus these matrix monotone functions preserve the pairs $\{f,g\}$ which give a non-zero non-negative commutator as well as preserving the conjectured $K_{\alpha}$ and $K_{\hat{\alpha}}$.  \\

The second piece of evidence is a result from \cite{HK} which we now discuss.  First let us define the concept of maximality:  If $f$ is in $K_a$,  $a$ is \emph{maximal} for $f$ if $f$ is not in any $K_c$ with $c >a$.  A sufficient (but by no means necessary) condition guaranteeing that $a$ is maximal for $f$ is that $f\in K_a$ and for all $b\in (0,\hat a), \int f'(t) e^{2bt} dt < \infty$.  To see this note that the derivative of $\tanh$ is $\cosh^{-2}$ so that if $f(t) = \int \tanh \hat a(t-s) d\mu(s) + d$ then 

$$\int f'(t) e^{2bt}dt = \hat a \int \frac{e^{2bt}}{\cosh^2 \hat a t} dt \int e^{2\hat a s}d\mu(s).$$ 
Thus if $f$ were in $K_c$ with $c > a$ (and thus $\hat c < \hat a$) the latter integral with $\hat c $ replacing $\hat a$ would diverge for $b$ close enough to $\hat a$.  We can now state the next theorem which is essentially given in \cite{HK}:  

\begin{theorem}\label{assumehalf}
Suppose $f\in K_a$ and  $\int f'(t) e^{2bt} dt < \infty$ for all $b\in (0, \hat a)$ (so that $a$ is maximal for $f$).  If $ K = i[f(P),g(Q)] \ge 0, K\ne 0$ then $g\in K_{\hat a}$.   
\end{theorem}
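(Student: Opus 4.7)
The plan is to follow the approach of \cite{HK}. The core idea is to use the Kato representation of $f \in K_a$ as a positive measure superposition of $\tanh\hat a(\cdot - s)$ in order to convert positivity of the commutator $K$ into a Herglotz-type condition on $g$; the maximality hypothesis provides exactly the amount of analytic extension needed. By the equivalence recalled after (\ref{tanhrep}), I would first write $f(t) = \int\tanh\hat a(t-s)\,d\mu(s) + d_1$ for a finite positive Borel measure $\mu$ on $\mathbb{R}$. The calculation displayed just before the theorem shows that the maximality hypothesis is equivalent to the one-sided exponential moment bound $\int e^{2bs}\,d\mu(s) < \infty$ for all $b \in (0, \hat a)$, which is precisely what will license complex shifts up to $i\hat a$ later.

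Next, I would analyze the integral kernel of $K = i[f(P), g(Q)]$, which equals $i\,\check f(x-y)\,(g(y) - g(x))$. Substituting the Kato representation and invoking the explicit Fourier transform of $\tanh\hat a(\cdot)$, the kernel factors as the product of a principal-value $1/\sinh$ kernel with the Fourier transform $\hat\mu(y-x)$ of $\mu$, plus contributions supported on the diagonal. The moment condition on $\mu$ furnishes $\hat\mu$ with an analytic extension to the half-strip $0<\mathrm{Im}\,\xi < 2\hat a$, which in turn extends the relevant convolution kernel analytically in its argument.

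Finally, I would test the positivity $\langle\psi,K\psi\rangle\ge 0$ against a family of vectors $\psi_z$ depending holomorphically on a complex parameter $z = t + i\beta$ with $\beta\in(0,\hat a)$ (for instance, suitably modulated translates of a fixed Schwartz vector). Using the analytic extension of $\hat\mu$ from the previous step to shift contours in the resulting integral, one identifies $\langle\psi_z, K\psi_z\rangle$, up to an explicit positive weight, with $\mathrm{Im}\,g(t + i\beta)$. Positivity on the real axis then forces $\mathrm{Im}\,g(t+i\beta)\ge 0$ throughout the strip $0<\mathrm{Im}\,z<\hat a$, and the reality of $g$ on $\mathbb{R}$ together with Schwarz reflection give the analogous statement in the lower half-strip, so that $g\in K_{\hat a}$.

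The main obstacle is the last step: constructing the test vectors and contour deformations in such a way that operator-positivity of $K$ translates into pointwise positivity of $\mathrm{Im}\,g$ on the full open strip, while handling the principal-value singularities of the $1/\sinh$ kernel and verifying that the test vectors lie in the form domain of $K$. The maximality condition is exactly what pushes the analytic extension up to $|\mathrm{Im}\,z| = \hat a$ rather than a strictly smaller value; without it, the same strategy would yield only $g\in K_{\hat a'}$ for some $\hat a' < \hat a$.
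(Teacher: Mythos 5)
The overall shape of your argument matches the paper's (and its antecedent in \cite{HK}): use the tanh-representation for $f$, extract the analytic extension of $\widehat{f'}$ that the exponential moments provide, evaluate the kernel of $K$ at antidiagonal points $z = t - i\beta,\ \bar z = t + i\beta$, and use operator positivity to force $\mathrm{Im}\,g(t+i\beta)\ge 0$. But there is a genuine gap at the heart of your step three: you write $\mathrm{Im}\,g(t + i\beta)$ as if $g$ already had an analytic extension to the strip, and then propose to deduce its sign. A priori $g$ is only a bounded, monotone, continuous function on $\mathbb{R}$ -- there is no $g(t+i\beta)$ until you have \emph{derived} the analyticity, and that derivation is the real content of the theorem, not a byproduct of the positivity test. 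Establishing it requires work: the paper first mollifies $g$ (so the kernel is smooth), shows via Hilbert--Schmidt estimates on $K_\epsilon^{1/2}e^{\pm bP/2}$ that the integral kernel $K_\epsilon(z,w)$ continues analytically to the bi-strip, and only then solves $K_\epsilon(z,w) = (2\pi)^{-1/2}\,\widehat{f'}(w-z)\,(g_\epsilon(z)-g_\epsilon(w))/(z-w)$ for $g_\epsilon(z)$ by division, patching around the zeros of $\widehat{f'}$. Your sketch has no counterpart to this division step, nor to the mollification that makes it possible.

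Two further, smaller omissions. First, even granting the analytic extension of $g$, your ``test vectors'' $\psi_z$ need to be shown to lie in the natural form domain and to yield the intended limiting identity; the paper handles this by a Gaussian regularization $e^{-\delta\xi^2}$ and a $\delta\downarrow 0$ limit, and you should say something analogous rather than appeal to ``suitably modulated translates.'' Second, after obtaining $\mathrm{Im}\,g_\epsilon(z)\,\mathrm{Im}\,z\ge 0$ for the mollified $g_\epsilon$, one still has to pass to the limit $\epsilon\to 0$: the Kato representation gives a family of positive measures $\mu_\epsilon$ of constant total mass, and a weak-$*$ compactness argument on the two-point compactification $[-\infty,\infty]$ is needed to produce a limiting measure $\mu$ and conclude $g\in K_{\hat a}$. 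Without that step you would only obtain the statement for the smoothed $g_\epsilon$, not for $g$ itself.
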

Our next result may seem a bit surprising in view of Theorem \ref{assumehalf}, but considering that we believe the Kato conjecture is true, it is just another step showing that we might be right:

\begin{theorem}\label{expdecay}
In Theorem \ref{assumehalf} the result is true if we drop the assumption that $f\in K_a$.  In other words, if $K = i[f(P),g(Q)] \ge 0, K\ne 0$ and $\int f'(t) e^{2bt} dt < \infty$ for all $b\in (0, \hat a)$, then $g\in K_{\hat a}$.  As a corollary, due to the symmetry between $f$ and $g$, if in addition  $\int g'(t) e^{2bt} dt < \infty$ for all $b\in (0, a)$, then $f \in K_a$ and thus under these exponential decay assumptions, the Kato conjecture is true. 
\end{theorem}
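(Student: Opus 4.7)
The plan is to strengthen the analytic continuation argument behind Theorem \ref{assumehalf} so that the analyticity and Herglotz structure required for $g\in K_{\hat a}$ can be deduced from positivity of $K$ together with the one-sided exponential decay of $f'$ alone, without pre-supposing $f\in K_a$. By Theorem \ref{derivs}, $f$ and $g$ are absolutely continuous with $f',g'\in L^1$, so in the momentum picture $K=i[f(P),g(Q)]$ is the integral operator with kernel
\[
K(\xi,\eta)=\frac{i}{2\pi}(f(\xi)-f(\eta))\widehat g(\xi-\eta)=\frac{1}{2\pi}\widehat{g'}(\xi-\eta)\int_{0}^{1}f'(\eta+s(\xi-\eta))\,ds.
\]
The exponential integrability of $e^{2bt}f'(t)$ for every $b<\hat a$ is Fourier-equivalent to $\widehat{f'}(\zeta)$ having a bounded holomorphic extension to the upper half-strip $\{0<\mathrm{Im}\,\zeta<2\hat a\}$.

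The main step is to test $\langle\phi,K\phi\rangle\geq 0$ against a rich family of vectors, for instance Gaussian coherent states. After the change of variables $u=\xi-\eta$, the positivity takes the form $\mathrm{Re}[\,i\int\widehat g(u)F_\phi(u)\,du\,]\geq 0$, where $F_\phi(u)$ is built from an inner integration of $\bar\phi(\eta+u)\phi(\eta)$ against $f'(\eta+su)$. Using the analytic continuation of $\widehat{f'}$ furnished by the decay hypothesis, one deforms the contour in the $u$-variable from the real line to the line $\mathrm{Im}\,u=2b$ with $b<\hat a$; as the coherent-state parameters vary, the resulting family of holomorphic inequalities assembles into a Herglotz-type boundary representation extending $g$ analytically to the upper half-strip $0<\mathrm{Im}\,z<\hat a$ with $\mathrm{Im}\,g(z)\geq 0$ there.

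The hardest step is upgrading this one-sided conclusion to analyticity in the full strip $|\mathrm{Im}\,z|<\hat a$ with the two-sided Herglotz sign $\mathrm{Im}\,g(z)\,\mathrm{Im}\,z\geq 0$. The bridge is the reality identity $\widehat{g'}(-u)=\overline{\widehat{g'}(u)}$ for $u\in\bbR$, which by Schwarz reflection propagates the upper half-strip analyticity of $\widehat{g'}$ (and hence of $g$) to the lower half-strip; combined once more with the positivity of $K$ this fixes the opposite sign of $\mathrm{Im}\,g$ in the lower half-strip. One subtle technical point will be that $g$ is only bounded measurable, so $\widehat g$ is a tempered distribution; the contour shifts will have to be justified by working first with a dense subspace of test vectors for which $F_\phi$ has enough decay and then passing to the limit. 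The corollary is then immediate: the Fourier transform interchanges $(f(P),g(Q))$ with a unitarily equivalent pair in which $f$ and $g$ are swapped (and $a,\hat a$ exchanged), so an additional exponential decay on $g'$ lets one apply the theorem in the reversed setup to conclude $f\in K_a$.
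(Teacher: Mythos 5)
The ingredients you single out are the correct ones, and your opening steps match the paper: absolute continuity from Theorem~\ref{derivs}, and the equivalence of the one-sided exponential moment hypothesis with a bounded holomorphic extension of $\widehat{f'}$ to the upper half-strip $0\le\operatorname{Im}\zeta<2\hat a$. The paper in fact obtains that extension by a trace-class mechanism (Proposition~\ref{traceformula} and Lemma~\ref{propp}: one shows $\tr(K^{1/2}e^{bP}K^{1/2})<\infty$ for $b<2\hat a$ and continues $\widehat{f'}(z)=(\sqrt{2\pi}/[g])\tr(K^{1/2}e^{-iPz}K^{1/2})$), whereas you invoke the Fourier correspondence directly; that difference is cosmetic.

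The genuine gap is in your central step, the contour shift. The kernel positivity reads $\int\widehat{g'}(u)\,F_\phi(u)\,du\ge 0$. To push the $u$-contour from $\mathbb R$ to $\operatorname{Im}u=2b$ you need \emph{both} factors to extend holomorphically into that region; you can arrange this for $F_\phi$ by choosing $\phi$ Gaussian and doing the $v=\eta+su$ substitution, but $\widehat{g'}$ is only known to be a bounded continuous function on $\mathbb R$ (since $g'\in L^1$), and its analytic continuation is not available a priori. Worse, it is not even part of the desired conclusion: $g\in K_{\hat a}$ means $g$ extends to the strip $|\operatorname{Im}z|<\hat a$, which by Paley--Wiener gives \emph{decay} of $\widehat{g'}$, not an analytic continuation of it (take $g(x)=\int\tanh\hat a(x-t)\,d\mu(t)$ with $\mu$ not compactly supported: then $g'$ has no exponential decay and $\widehat{g'}$ has no strip extension). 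Consequently the assertion that the ``family of holomorphic inequalities assembles into a Herglotz-type boundary representation extending $g$ analytically'' has no mechanism behind it, and your later appeal to Schwarz reflection via $\widehat{g'}(-u)=\overline{\widehat{g'}(u)}$ to pass analyticity of $\widehat{g'}$ from upper to lower half-strip rests on the same false premise.

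What the paper does differently, and what you need, is a preliminary \emph{regularization of $g$}: set $g_\epsilon=\psi_\epsilon*g$ with a Gaussian mollifier, so that $g_\epsilon$ is automatically entire and $K_\epsilon=\int e^{-iPa}Ke^{iPa}\psi_\epsilon(a)\,da\ge 0$. Now there is no analyticity to prove; one only needs the Herglotz sign. The kernel identity $K_\epsilon(z,w)=(1/\sqrt{2\pi})\,\bigl(g_\epsilon(z)-g_\epsilon(w)\bigr)(z-w)^{-1}\,\widehat{f'}(w-z)$ continues to complex arguments because $g_\epsilon$ is entire and $\widehat{f'}$ extends, and the trace-class bound shows $K_\epsilon(x-iy,x+iy)$ equals a quadratic form $\langle\psi,K_\epsilon\psi\rangle\ge 0$ with $\hat\psi(\xi)=e^{-ix\xi}e^{y\xi}$ (regularized by $e^{-\delta\xi^2}$). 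Since $\widehat{f'}(2iy)>0$ this forces $\operatorname{Im}g_\epsilon(x+iy)\ge 0$ for $0\le y<\hat a$; reflection $g_\epsilon(\bar z)=\overline{g_\epsilon(z)}$ gives the full Herglotz sign; Kato's representation theorem produces a positive measure $\mu_\epsilon$ with fixed total mass $[g]/2$; and a two-point compactification/weak-$*$ argument lets one pass $\epsilon\to 0$ and conclude $g\in K_{\hat a}$. The regularization and the final compactness step are both absent from your outline, and the contour shift would not survive without the first. The corollary by $P\leftrightarrow Q$ symmetry is correctly stated and matches the paper.
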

We prove Theorem \ref{expdecay} in Section 4. \\
We remark that the theorem is true if we replace exponential decay in the positive direction with exponential decay in the negative direction, or in other words if we replace $\int f'(t) e^{2bt} dt < \infty$ for all $b\in (0, \hat a)$ with $\int f'(t) e^{-2bt} dt < \infty$ for all $b\in (0, \hat a)$. 

In Theorem \ref{expdecay} we mentioned the symmetry between $f$ and $g$.  This will be used several times in this work and it is just a consequence of the unitary equivalence of $Q$ and $P$.  If $\mathcal{F}$ is the Fourier transform, then $Q = \mathcal F P \mathcal F^{-1}$ and  $-P = \mathcal F Q \mathcal F^{-1}$ .  If we write 

$$\tilde K = \mathcal F K \mathcal F^{-1},$$ then 

\begin{equation}
\tilde K = i[-g(-P), f(Q)].  
\end{equation}

This symmetry is evident in the expression for the $x$-space and $p$-space expressions for the integral kernels of the operators $K$ and $\tilde K$:

\begin{equation} \label{kernels}
K(x,y) = (1/\sqrt{2\pi})\frac{g(x) - g(y)}{x-y}\widehat{f'}(y-x), \quad \tilde K(\xi,\eta) =  (1/\sqrt{2\pi})\frac{f(\xi) - f(\eta)}{\xi - \eta} \widehat{g'}(\xi-\eta)
\end{equation}



In Section 5 we turn our attention to the case where $K$ has finite rank.  We prove a number of results there, chief among them is:

\begin{theorem}\label{finiterank}
Suppose $i[f(P),g(Q)] = \sum_{k=1}^N (\phi_k,\cdot ) \phi_k $ where the $\phi_k$ are linearly independent $L^2(\mathbb{R})$ functions.
Then there exist positive $a$ and $b$ such that $f\in K_a$ and $g\in K_b$.  If $f$ is analytic in the strip $\{|\mathrm{Im}z| < c\}$ and bounded there, then $f\in K_c$.  If $f\in K_{\alpha}$ and $g\in K_{\beta}$
then $\alpha\beta \le \pi/2$.
\end{theorem}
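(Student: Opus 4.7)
The plan is to handle the three claims in sequence, with Part 1 being the main work. Combining $K(x,y)=\sum_{k=1}^N \phi_k(x)\overline{\phi_k(y)}$ with the kernel formulas (\ref{kernels}) gives the fundamental identity
\begin{equation*}
\sqrt{2\pi}\sum_{k=1}^N \phi_k(x)\overline{\phi_k(y)} = \frac{g(x)-g(y)}{x-y}\widehat{f'}(y-x)
\end{equation*}
and its Fourier analogue for $\tilde K$. For Part 1, since $\mathrm{Range}(K) = \mathrm{span}\{\phi_k\}$ is $N$-dimensional, for each fixed $\eta_0$ the function $\xi\mapsto\tilde K(\xi,\eta_0)$ lies in $\mathrm{span}\{\hat\phi_k\}$; choosing $N{+}1$ points $\eta_0,\ldots,\eta_N$ and coefficients $c_j$ with $\sum_j c_j\overline{\hat\phi_k(\eta_j)} = \delta_{k\ell}$ isolates
\begin{equation*}
\sqrt{2\pi}\,\hat\phi_\ell(\xi) = \sum_{j=0}^N c_j\,\frac{f(\xi)-f(\eta_j)}{\xi-\eta_j}\,\widehat{g'}(\xi-\eta_j),
\end{equation*}
and solving for $f(\xi)$ expresses it as a rational function of $\hat\phi_\ell(\xi)$ and translates of $\widehat{g'}$; a symmetric identity writes $g(x)$ in terms of the $\phi_k$ and translates of $\widehat{f'}$. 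To promote these formal expressions to genuine analytic extensions I would bootstrap regularity: by Theorem \ref{derivs} the kernel $K(x,y)$ is continuous, so $\phi_\ell \in \mathrm{Range}(K^n) \subset C^\infty$ for every $n$, and feeding $C^\infty$ smoothness back into the rational identities combined with Fourier analysis of $\widehat{f'}$ and $\widehat{g'}$ yields analytic extensions of $f$ and $g$ to small strips. The Herglotz condition $\mathrm{Im}\,f(z)\,\mathrm{Im}\,z \geq 0$ on these strips is then extracted from the positivity $K\geq 0$ applied to coherent-state-type test vectors probing the strip of analyticity.

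For Part 2, $u(z):=\mathrm{Im}\,f(z)$ is a bounded harmonic function on $\{0<\mathrm{Im}\,z<c\}$, vanishes on $\mathbb R$ (as $f$ is real there), and by Part 1 is $\geq 0$ on a narrower substrip $\{0<\mathrm{Im}\,z<a\}$. A Phragm\'en--Lindel\"of / maximum-principle argument extends the sign to the full strip: the zero set of the real-analytic function $u$ in the interior of $\{0<\mathrm{Im}\,z<c\}$ consists of one-dimensional real-analytic curves, and such a curve cannot separate a region where $u<0$ from the substrip where $u\geq 0$ while $u$ also vanishes on the base $\mathbb R$, so $u\geq 0$ throughout, giving $f\in K_c$.

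For Part 3, the representation (\ref{tanhrep}) together with the explicit Fourier transform of $\mathrm{sech}^2$ gives $|\widehat{f'}(\xi)|=O(e^{-\alpha|\xi|})$ and $|\widehat{g'}(\xi)|=O(e^{-\beta|\xi|})$, so the kernels in (\ref{kernels}) have exponential off-diagonal decay at rates $\alpha$ and $\beta$. The similarity family $K_{c,d}:=e^{-cQ}e^{-idP}K\,e^{idP}e^{cQ}$ is bounded and rank-$N$ with unchanged non-zero spectrum throughout the rectangle $|c|<\alpha$, $|d|<\beta$; Kato's rank-one solution saturates $\alpha\beta=\pi/2$, and the non-commutation $[Q,P]=i$ obstructs analytic continuation of this similarity past the extremal bound while preserving positivity and non-triviality of $K$. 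The main obstacle is Part 1, where the central difficulty is closing the bootstrap loop from smoothness to analyticity of the $\phi_k$ while handling the zero sets of $\widehat{f'}$, $\widehat{g'}$, and the various denominators appearing in the rational inversion.
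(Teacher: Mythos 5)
There are genuine gaps in each of the three parts of your proposal.

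\textbf{Part 1.} The step from $\phi_\ell\in C^\infty$ to an analytic continuation of $\phi_\ell$ does not follow from ``feeding $C^\infty$ smoothness back into the rational identities'': a $C^\infty$ function need not be analytic, and no amount of finite--order bootstrapping produces analyticity. What is required --- and what constitutes the bulk of the paper's argument --- is a quantitative factorial bound $\|\phi_j^{(n)}\|_\infty\le a K^n n!$ obtained by a careful induction. The induction couples the Leibniz expansion of $\gamma_j^{(n)}$, where $\gamma_j(x)=\tfrac{1}{\sqrt{2\pi}}\tfrac{g(x)-g(y_j)}{x-y_j}\widehat{f'}(y_j-x)$ is a fixed invertible linear combination of the $\phi_k$, with the diagonal identities $g'=(2\pi/[f])\sum_j|\phi_j|^2$ and $\widehat{f'}(y-x)=(\sqrt{2\pi}/[g])\sum_j\int\phi_j(u+x)\overline{\phi_j(u+y)}\,du$ to close the loop. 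Your rational--inversion plan captures the right algebra but the crucial analytic content is missing.

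\textbf{Part 2.} The maximum-principle argument is not merely incomplete; it cannot work, because it never uses $K\ge0$, and the conclusion genuinely depends on it. Knowing $\mathrm{Im}\,f\ge0$ on a substrip $\{0<\mathrm{Im}\,z<a\}$ together with bounded analyticity on $\{|\mathrm{Im}\,z|<c\}$ does not force $\mathrm{Im}\,f\ge0$ up to height $c$: a bounded harmonic function on a strip is determined by \emph{both} boundary traces (and behavior at $\pm\infty$), and nothing in your hypotheses constrains the trace on $\mathrm{Im}\,z=c$. In fact, a function in $K_a$ can have a bounded analytic continuation across $\mathrm{Im}\,z=a$ whose imaginary part changes sign, which is exactly what the paper's discussion of maximality addresses. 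The paper proves this part by growing the strip of analyticity of the $\phi_j$ in doubling steps via the $\gamma_j$ identity, converting that growth into improved exponential decay of $f'$ through $f'(\xi)=(2\pi/[g])\sum_j|\widehat{\phi_j}(\xi)|^2$, and then invoking Theorem~\ref{expdecay}, which is where positivity enters.

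\textbf{Part 3.} The similarity-family observation is heuristic and does not establish $\alpha\beta\le\pi/2$. The paper's argument is short and concrete: $g\in K_\beta$ yields $g'(x)=\hat\beta\int\cosh^{-2}(\hat\beta(x-t))\,d\mu(t)$ with $\hat\beta=\pi/2\beta$, so $\int g'(x)e^{2sx}\,dx$ factors as $\hat\beta\int\cosh^{-2}(\hat\beta x)e^{2sx}\,dx\cdot\int e^{2st}\,d\mu(t)$ and diverges unless $|s|<\hat\beta$; since the analyticity established in Part 1 (together with the $L^2$ bounds for the $\phi_j$ in substrips) already gives $\int g'(x)e^{2s|x|}\,dx<\infty$ for all $s<\alpha$, one concludes $\alpha\le\hat\beta$, i.e.\ $\alpha\beta\le\pi/2$.
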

There are a couple of things to note about this theorem.  The first is that the finite rank condition does not allow $f$ and $g$ to be in $K_\alpha$ and $K_\beta$ respectively if $\alpha \beta > \pi/2$.  If we remove the finite rank condition this is not true (as the original Howland example shows).  Secondly, although we have not proved Kato's conjecture in this case, the only thing missing is the analyticity and boundedness in large enough strips.  The positivity of the imaginary part in the upper half strip comes for free once this analyticity is known.  

The next theorem shows that given $K=i[f(P),g(Q)] \ne 0, K\ge 0$ and that $f$ and $g$ are bounded real and measurable, we can also assume in proving the Kato conjecture that $f\in K_a$ and $g\in K_b$ for some positive $a$ and $b$ and that both are entire.

\begin{theorem} \label{$K_a$ assumption}
Suppose we know that the Kato conjecture is true for entire functions $f$ and $g$ with $f\in K_a$ and $g\in K_b$ for some positive $a$ and $b$.  Then the Kato conjecture is true if we just assume that $f$ and $g$ are real, bounded, and measurable.
\end{theorem}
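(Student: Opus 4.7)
The plan is an approximation argument. Starting from real bounded measurable $f, g$ with $K = i[f(P), g(Q)] \ge 0$ and $K \ne 0$, I will construct approximants $(f_n, g_n)$ satisfying the hypotheses of the assumed form of Kato's conjecture (entire and in $K_{a_n}, K_{b_n}$ for some positive $a_n, b_n$), apply the assumption to obtain integral representations for $(f_n, g_n)$, and pass to the limit. The backbone of the construction is translation averaging, which preserves positivity of the commutator because $f(P)$ and $g(Q)$ are covariant under conjugation by $e^{itQ}$ and $e^{itP}$.

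Concretely, for $\phi, \psi \ge 0$ smooth with unit mass, set $f_\phi := f * \phi$ and $g_\psi := g * \psi$. With $U_s := e^{-isQ}$ and $V_t := e^{itP}$ one has $U_s^* f(P) U_s = f(P+s)$ and $V_t^* g(Q) V_t = g(Q+t)$, while $[U_s, g(Q)] = [V_t, f(P)] = 0$. Hence
\[
i[f_\phi(P), g_\psi(Q)] = \iint \phi(s)\psi(t)\, U_s V_t \, K \, V_t^* U_s^* \, ds \, dt,
\]
a convex combination of positive operators, so $\ge 0$; it is nonzero for generic $\phi, \psi$ since each integrand is a nonzero positive operator. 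Narrow Gaussians for $\phi, \psi$ additionally make $f_\phi, g_\psi$ entire and bounded on $\mathbb R$. The crux is then upgrading the approximants to $K_{a_n}, K_{b_n}$ membership: smoothing alone does not impose the Herglotz sign condition.

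For this I would invoke Theorem \ref{expdecay} (together with its mirror at $-\infty$), which delivers $K_a$-membership of the partner from one-sided exponential decay of the derivative. To manufacture such decay for bounded measurable data I would precompose with a matrix monotone function of the form $F_\lambda(u) = \tanh(\lambda u)$; by Theorem \ref{monotone} this preserves positivity of the commutator, and $(F_\lambda \circ f_\phi)' = \lambda\, \mathrm{sech}^2(\lambda f_\phi)\, f_\phi'$ can be arranged, by tuning $\lambda$ and the convolutions, to satisfy the exponential integrability demanded by Theorem \ref{expdecay}. Applying the assumed conjecture to the regularized pair then yields representations of the form (\ref{tanhrep}) with positive measures $\mu_n, \nu_n$ of uniformly bounded total mass (controlled by $\|f\|_\infty, \|g\|_\infty$). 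Weak-$*$ compactness extracts subsequential limits, continuity and uniform boundedness of the $\tanh$ kernel pass the representation to the limit, and undoing the matrix monotone composition via the Herglotz/Loewner structure recovers the representation of $f$ and $g$. The hardest step is the middle one: coordinating the convolutional smoothing with matrix monotone composition so that both members of the regularized pair simultaneously land in $K_{a_n}$ and $K_{b_n}$ while preserving a nonzero positive commutator.
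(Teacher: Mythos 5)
Your overall scaffolding (regularize by convolution to preserve positivity of the commutator, apply the assumed conjecture, extract weak-$*$ limits of the representing measures on the two-point compactification) matches the paper's strategy, and the translation-averaging formula for $i[f_\phi(P),g_\psi(Q)]$ is exactly right. But the central step --- getting the regularized pair into $K_{a_n}$ and $K_{b_n}$ --- is where your proposal breaks down, and you correctly flag it as the hard part without actually solving it.

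The route you sketch does not work. Composing with $F_\lambda(u)=\tanh(\lambda u)$ gives $(F_\lambda\circ f_\phi)'=\lambda\,\mathrm{sech}^2(\lambda f_\phi)\,f_\phi'$, and since $f_\phi$ is bounded the factor $\mathrm{sech}^2(\lambda f_\phi)$ is bounded above and below away from zero; so the derivative has the same integrability against $e^{2b\xi}$ as $f_\phi'$ did. No amount of tuning $\lambda$ manufactures the one-sided exponential decay that Theorem~\ref{expdecay} requires. Gaussian smoothing likewise makes the function entire but says nothing about the sign of $\mathrm{Im}$ in a strip: entire and bounded on $\mathbb R$ is far from $K_a$. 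And even if this were repaired, ``undoing'' the matrix monotone composition at the end is delicate because $\tanh^{-1}$ has singularities at $\pm 1$, so applying it to a function in some $K_c$ whose range approaches $\pm1$ is not a simple operation.

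The device you are missing is that one should convolve not with a Gaussian alone but with $\phi_\epsilon(t)=(2\epsilon)^{-1}\cosh^{-2}(t/\epsilon)$, the derivative of $\tfrac12\tanh(t/\epsilon)$. Integrating by parts, and using that $f$ is already known (from Theorem~\ref{derivs} and Corollary~\ref{positivederivs}) to be absolutely continuous with $f'\ge 0$, one gets immediately
\[
f_\epsilon(t)=\phi_\epsilon*f(t)=\int \tanh\bigl(\epsilon^{-1}(t-s)\bigr)\,\tfrac12 f'(s)\,ds + \tfrac12\bigl(f(\infty)+f(-\infty)\bigr),
\]
which is precisely Kato's representation with positive measure $d\mu=\tfrac12 f'\,ds$, so $f_\epsilon\in K_{\pi\epsilon/2}$ by definition. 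A subsequent Gaussian convolution $\psi_\alpha*$ makes the function entire while preserving $K_{\pi\epsilon/2}$-membership because convolution is commutative (convolve the $\tanh$-representation with $\psi_\alpha$ first). This is the step that produces both entirety and $K_a$-membership simultaneously, without Theorem~\ref{expdecay} or matrix monotone compositions at all. The rest of your argument (nonnegativity of the regularized commutator, applying the assumption, uniform mass bounds from $\|f\|_\infty$, weak-$*$ compactness on $[-\infty,\infty]$, passing to the limit in $\alpha$, then $\epsilon$, then $\delta$) would then go through as you describe.
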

We prove Theorem \ref{$K_a$ assumption} in Section 6.
\section{Proof of theorem \ref{derivs}}  

\begin{theorem}
If $f$ and $g$ are real, bounded, and measurable with $K \ge 0$ and $K\ne 0$, then they are both absolutely continuous with $L^1$ derivatives.  
\end{theorem}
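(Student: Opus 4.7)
The plan is to derive the regularity of $g$ (and, by the Fourier symmetry $\widetilde K=\mathcal F K\mathcal F^{-1}$ recalled in the introduction, of $f$) from the positivity $K\ge 0$ by rewriting the quadratic form $\langle\phi,K\phi\rangle$ explicitly and then running a Loewner-type analysis on the resulting bilinear form in $g$. The target is to show that the distributional derivative $g'$ is a finite signed measure and is in fact absolutely continuous, so that $g$ is absolutely continuous with $g'\in L^1$.

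Concretely, I would first make the bilinear form explicit: from
$$\langle\phi,K\phi\rangle = 2\,\mathrm{Im}\,\langle g(Q)\phi, f(P)\phi\rangle$$
and the formal kernel $K(x,y)=(2\pi)^{-1/2}\frac{g(x)-g(y)}{x-y}\widehat{f'}(y-x)$ from (\ref{kernels}) (interpreted distributionally, since $\widehat{f'}=i\xi\hat f$ is tempered whenever $f$ is merely bounded), one rewrites $\langle\phi,K\phi\rangle\ge 0$ as a double integral in which a divided difference of $g$ is weighted by a tempered distribution built from $f$. Next, I would test against $\phi=\sum_j c_j\rho(\cdot-a_j)$ --- a sum of narrow translates of a fixed bump $\rho$ at distinct centers $a_1,\dots,a_n$ --- so that in the narrow-bump limit the positivity reduces to positive semidefiniteness of an $n\times n$ matrix whose $(j,k)$-entry is $\frac{g(a_j)-g(a_k)}{a_j-a_k}$ multiplied by an $f$-dependent weight evaluated at $a_j-a_k$. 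Once the Loewner matrix of $g$ is isolated from the $f$-dependence, the Loewner-theorem circle of ideas (the same tools that underlie Theorem \ref{monotone}) forces $g$ to admit a Herglotz representation on some strip, hence to be absolutely continuous with $g'\in L^1$. The hypothesis $K\ne 0$ excludes the degenerate case in which the bilinear form vanishes identically; by \cite{HK} this is precisely the constant/periodic case that the theorem explicitly sets aside.

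The main obstacle, and where I expect the genuine technical work to sit, is the decoupling of $g$ from $f$ in the test-function computation: the $f$-dependent weight is only a tempered distribution a priori and has no controlled sign pattern, while the positivity of $K$ controls only the combined form. Resolving this will likely require combining a spectral cutoff in $P$ that replaces $f$ by a tame truncation without destroying the useful weak form of positivity, with a bootstrap argument that first extracts crude regularity of $f$ and then uses it to sharpen the Loewner matrix inequality for $g$, before closing the loop via the Fourier symmetry to conclude the analogous regularity of $f$.
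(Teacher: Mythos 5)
Your proposal takes a genuinely different route from the paper's proof, but there is a structural flaw that cannot be repaired: the decoupling of the $f$-dependence that you flag as ``the main obstacle'' is not merely technically hard, it is impossible, because its success would entail a conclusion strictly stronger than the truth.

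To see why, write out what the narrow-bump test actually yields. Using the kernel $K(x,y) = (2\pi)^{-1/2}\,\frac{g(x)-g(y)}{x-y}\,\widehat{f'}(y-x)$ (or $\widehat{df}$ in place of $\widehat{f'}$ before differentiability is known), testing against $\phi = \sum_j c_j\rho(\cdot - a_j)$ in the narrow-bump limit gives positive semidefiniteness of the \emph{Hadamard product}
$$
\left[\,\frac{g(a_j)-g(a_k)}{a_j - a_k}\,\right]_{j,k}\circ\left[\,\widehat{df}(a_k - a_j)\,\right]_{j,k}\ \ge 0.
$$
The second factor is itself positive semidefinite (Bochner), and the Schur product theorem goes the other way: a Hadamard product of PSD matrices is PSD. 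It gives you no right to divide out the $f$-factor. And you cannot get around this by cleverness, because the conclusion you are aiming at --- that the Loewner matrix $\bigl[\frac{g(a_j)-g(a_k)}{a_j-a_k}\bigr]$ is PSD for all choices of nodes $a_j$, either globally or on short intervals --- would say $g$ is operator monotone there, hence by Loewner's theorem extends as a Herglotz function to the whole slit plane. That is false for the very examples the theorem is built to cover: $g=\tanh$ has poles at $i\pi/2 + i k\pi$, so it is not operator monotone on any interval, yet it is the $g$ in Kato's rank-one example with $K\ge 0$, $K\ne 0$. So any argument that successfully strips $f$ out of the matrix inequality would prove something false. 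The $f$-weight is an essential part of the positivity, not a nuisance to be removed.

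The paper takes a very different path that never touches Loewner's theorem. Using facts already established in \cite{HK} --- that $f,g$ are monotone and continuous and that $K$ is trace class --- it invokes Brislawn's theorem \cite{B1} on diagonal Lebesgue averages of trace-class kernels:
$$
\lim_{r\to 0}\ |C_r|^{-2}\int_{C_r\times C_r} K(x+t,x+s)\,dt\,ds \;=\; \sum_i\lambda_i|\phi_i(x)|^2\qquad \text{a.e.}
$$
Near the diagonal the weight $\widehat{df}(y-x)$ is continuous and nonvanishing, so after dividing it out one obtains a.e.\ convergence of the averaged divided difference of $g$ to a function $k(x)$, with a dominated-convergence bound supplied by the Hardy--Littlewood maximal function (since $\sum\lambda_i\|M\phi_i\|_2^2<\infty$). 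The paper then shows, by integrating against suitable test functions and letting them sharpen to step functions, that $k$ is the distributional derivative of $g$ and that $g(t)-g(-\infty)=\int_{-\infty}^t k$, i.e.\ $g$ is absolutely continuous with $g'=k\in L^1$. Fourier symmetry gives the same for $f$. Nothing in this argument isolates a Loewner matrix; the regularity comes from Lebesgue-point averaging of the trace-class kernel. If you want to salvage part of your strategy, the lesson is that the divided-difference structure of the kernel should be exploited through \emph{averages along the diagonal} (where the $f$-weight is under control), not through \emph{off-diagonal} matrix inequalities (where it is not).
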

\begin{proof}
Two results important for the proof of this theorem were already proved in \cite{HK}, namely that $f$ and $g$ are  monotone and continuous (without loss of generality we assume they are both monotone increasing), and in addition that $K$ is trace class.  We refer the reader to the proofs in that paper.  The most difficult one is the monotonicity. Since we do not know the absolute continuity of $f$ we can substitute the equally valid formula

$$K(x,y) = (1/\sqrt{2\pi})\frac{g(x) - g(y)}{x-y}\widehat{df}(y-x)$$
for the expression in (\ref{kernels}).  Here $df$ is the positive measure associated with the monotone function $f$.  Let $\{\phi_k\}$ be an orthonormal set of eigenfunctions of $K$ spanning the space where $K>0$.  Suppose $K\phi_k = \lambda_k\phi_k$.  Then
$$K = \sum_k \lambda_k (\phi_k,\cdot) \phi_k $$
We use results and methods from Brislawn \cite{B1}  where he is interested in obtaining a substitute for the formula $\tr K =\int K(x,x) dx$ when we do not know that $K(x,y)$ is continuous.  From \cite {B1} we know that (with $C_r = [-r,r]$ and $|C_r| = 2r$)

$$ \lim_ {r \to 0} |C_r|^{-2} \int _{C_r \times C_r} K(x+t,x+s) dtds = \sum \lambda_i |\phi_i(x)|^2 \ a.e. $$ 
This implies that 
\begin{lemma}\label{k(x)}
\begin{equation}
\lim_{r \to 0} \int_{|t|<r,|s|<r}(g(x+t) - g(x+s))(t-s)^{-1} dt ds/(2r)^2 = :k(x)\ a.e.
\end{equation}
\end{lemma}
As we will show, $k$ will turn out to be the derivative of  $g$.

Let $[f] = f(\infty) - f(-\infty)$ and $$A_r \phi(x) = |C_r|^{-1} \int_{C_r}  \phi(x+t)dt,$$ $$A_r^{(2)}\psi(x,y) = |C_r|^{-2}\int_{C_r\times C_r}\psi(x+t,y+s) dtds.$$
For $x$ close to $y$ we can write $(2\pi)^{-1} [f] (g(x) - g(y))/(x-y) = K_1(x,y) := K(x,y) \widehat {df}(0)/\widehat{df}(y-x)$.  Thus 
$$\int(g(x+t) - g(x+s))(t-s)^{-1} dt ds/(2r)^2= (2\pi/[f]) [(A_r^{(2)}K)(x,x) - A_r^{(2)}(K-K_1)(x,x)$$
Since $(A_r^{(2)}K)(x,x)$ converges a.e., we need only show that $A_r^{(2)}(K-K_1)(x,x) \rightarrow 0, a.e.$ 
Given $\epsilon > 0 $, for small enough $r$, $|1- \widehat{df}(0)/\widehat{df}(t-s)| < \epsilon$ if both $|t|<r, |s|<r$.  Thus 
$$|A_r^{(2)}(K-K_1)(x,x)| \le \epsilon \sum \lambda_i (A_r|\phi_i|(x))^2 \le   \epsilon \sum \lambda_i (M\phi_i(x))^2.$$
where $M \psi$ is the maximal function. Since $||M\psi||_2  \le C||\psi||_2 , \int \sum \lambda_i (M\phi_i(x))^2 dx \le  C^2\sum \lambda_i < \infty$,  the sum above converges a.e.  This proves the result.
\end{proof}

We calculate further to elucidate the meaning of $k$.

\begin{proposition}
    $$k(x) = \lim_{r \to 0} \int_0^1 h(w) \frac{g(x+rw) - g(x-rw)}{2rw} dw$$
where $h(w)$ is an increasing function with $\int_0^1 h(w) dw =1$.  In fact 
$$h(w) = w \log(\frac{1+w }{1-w})$$
\end{proposition}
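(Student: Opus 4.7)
The plan is to compute the defining double integral for $k(x)$ (Lemma \ref{k(x)}) in closed form, using the absolute continuity of $g$ granted by Theorem \ref{derivs} to exchange integrations, evaluating the resulting inner integral explicitly, and finishing with an integration by parts in $w$.

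First, writing $(g(x+t)-g(x+s))/(t-s)=(t-s)^{-1}\int_s^t g'(x+u)\,du$, using the $t\leftrightarrow s$ symmetry of the integrand, and applying Fubini, one finds
\[
\frac{1}{(2r)^{2}}\int_{[-r,r]^{2}}\frac{g(x+t)-g(x+s)}{t-s}\,dt\,ds=\frac{1}{2r^{2}}\int_{-r}^{r}g'(x+u)\,\Phi(u,r)\,du,
\]
where $\Phi(u,r)=\int_{-r}^{u}\int_{u}^{r}(t-s)^{-1}\,dt\,ds$. Two elementary antiderivatives give
\[
\Phi(u,r)=2r\log(2r)-(r+u)\log(r+u)-(r-u)\log(r-u)=r\,\psi(u/r),
\]
with $\psi(w):=2\log 2-(1+w)\log(1+w)-(1-w)\log(1-w)$. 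The substitution $u=rw$ then makes every explicit $r$ cancel, leaving
\[
\frac{1}{(2r)^{2}}\int_{[-r,r]^{2}}\frac{g(x+t)-g(x+s)}{t-s}\,dt\,ds=\frac{1}{2}\int_{-1}^{1}g'(x+rw)\,\psi(w)\,dw.
\]

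Since $\psi$ is even, the right side equals $\tfrac{1}{2}\int_{0}^{1}[g'(x+rw)+g'(x-rw)]\psi(w)\,dw$. I would now integrate by parts in $w$: the antiderivative of $g'(x+rw)+g'(x-rw)$ is $r^{-1}[g(x+rw)-g(x-rw)]$, which vanishes at $w=0$, and $\psi(1)=0$, so both boundary terms drop out. A brief differentiation gives $\psi'(w)=-\log\frac{1+w}{1-w}$, whence
\[
\frac{1}{2}\int_{-1}^{1}g'(x+rw)\psi(w)\,dw=\int_{0}^{1}\log\!\Big(\frac{1+w}{1-w}\Big)\frac{g(x+rw)-g(x-rw)}{2r}\,dw=\int_{0}^{1}h(w)\,\frac{g(x+rw)-g(x-rw)}{2rw}\,dw
\]
with $h(w)=w\log\frac{1+w}{1-w}$. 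This identity holds for every $r>0$; letting $r\to 0$ and invoking Lemma \ref{k(x)} gives the claimed formula.

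Finally, $h$ is increasing on $[0,1)$ because $h'(w)=\log\frac{1+w}{1-w}+\frac{2w}{1-w^{2}}\ge 0$, and $\int_{0}^{1}h(w)\,dw=1$ follows from $\int_{0}^{1}w\log(1+w)\,dw=\tfrac14$ and $\int_{0}^{1}w\log(1-w)\,dw=-\tfrac34$, both obtained by term-by-term integration of the power series for $\log(1\pm w)$. The only step requiring a moment's care is the integration by parts: $\psi'$ is integrable on $[0,1]$ (its only singularity is logarithmic at $w=1$) and $g$ is bounded, so $\psi$ and $r^{-1}[g(x+r\cdot)-g(x-r\cdot)]$ are both absolutely continuous on $[0,1]$ and the standard formula applies. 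That is the sole technical wrinkle; the rest is a direct computation once Theorem \ref{derivs} is in hand.
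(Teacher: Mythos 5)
There is a circularity problem. The statement you are proving (Proposition~2.3) is an intermediate step in the paper's proof of Theorem~\ref{derivs} itself: the absolute continuity of $g$ is only established in Theorem~\ref{gac}, several statements later, and the chain Lemma~\ref{k(x)} $\to$ Proposition~2.3 $\to$ Lemma~\ref{forLDCT} $\to$ Proposition~2.5 $\to$ Theorem~\ref{gac} is precisely what constitutes that proof. Your very first step, writing $(g(x+t)-g(x+s))/(t-s)=(t-s)^{-1}\int_s^t g'(x+u)\,du$, and your integration by parts, which uses the identity $\frac{d}{dw}\bigl[g(x+rw)-g(x-rw)\bigr]=r\bigl[g'(x+rw)+g'(x-rw)\bigr]$, both presuppose that $g$ is absolutely continuous with $g'\in L^1$. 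At this point in the development you are only entitled to the facts quoted at the top of Section~2 from \cite{HK}: $g$ is monotone, continuous, and bounded. So the proposal assumes the conclusion it is working toward.

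The good news is that the flaw is entirely repairable, and the repaired argument is exactly the paper's. What a monotone continuous $g$ gives you for free is a finite positive Lebesgue--Stieltjes measure $dg$, with $g(x+t)-g(x+s)=\int_s^t dg(x+u)$. Replacing every occurrence of $g'(x+u)\,du$ with $dg(x+u)$, your Fubini step, your explicit evaluation of the inner integral $\Phi(u,r)$, the substitution $u=rw$, and finally a \emph{Stieltjes} integration by parts (valid here because $\psi(w)=2\log 2 - (1+w)\log(1+w)-(1-w)\log(1-w)$ is absolutely continuous and vanishes at $w=\pm1$, so the boundary terms drop and $-\int \psi'(w)\,g(x+rw)\,dw$ replaces $\int\psi\,dg$) all go through verbatim and land on the same closed form. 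That is precisely what the paper does, working with $dg(u)$ and the function $j=\log 4-\psi$ throughout. Your final verifications that $h$ is increasing and $\int_0^1 h=1$ are correct and match the paper's power-series argument in spirit.

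So: same decomposition, same integral, same integration by parts, same answer — but you must run the whole computation at the level of the Stieltjes measure $dg$ rather than the (not yet available) density $g'$, or the argument is circular.
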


\begin{proof}
Define
$$I_r(x) = \int_{C_r\times C_r} \frac{g(x+t) - g(x+s)}{t-s} dt ds(2r)^{-2}.$$
We rewrite this as 
$$I_r(x) = (2r^2)^{-1}\int \int_{|t|<r}\int_{s<t, |s| <r} (t-s)^{-1} 1_{[x+s,x+t]}(u) ds dt dg(u).$$
Doing the $s$ integral and then the $t$ integral we have 
$$I_r(x) = (2r^2)^{-1}\int \int_{|t| < r} 1_{[x-r,x+t]}(u) \log \frac{t+r}{t-(u-x)} dt dg(u)= $$
$$ (2r)^{-1} \int 1_{[x-r,x+r]} (\log 4 - j(w)) dg(u) ,$$ $$ j(w) = (1+w)\log(1+w) + (1-w)\log(1-w), w = (u-x)/r.$$
Integrating by parts we find
$$I_r(x) = (2r)^{-1} \int ((d/du)j(w) )(g(u) - g(x) du = (2r)^{-1} \int_{-1}^1 j'(w) (g(x+rw) - g(x))dw = $$
$$\int_0^1 (w \log \frac{1+w}{1-w}) (\frac {g(x+rw) - g(x-rw)}{2rw}) dw.$$

We have $h(w) =w\log \frac{1+w}{1-w} = 2\sum_{m=1}^\infty w^{2m}/(2m-1)$ so that for $w>0$, $h(w)$ is positive and $h'(w) >0$.  $h(w) \uparrow \infty$ as $w\uparrow 1$.  Integration by parts gives $\int_0^1 h(w)dw = 1$.  
\end{proof}
\begin{lemma} \label{forLDCT}
$$0\le I_r(x) \le G(x)$$
for some integrable function $G$.
\end{lemma}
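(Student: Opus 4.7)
My plan is to handle positivity directly from the already-established monotonicity of $g$ and to get the upper bound by relating $I_r$ to the trace-class operator $K$ through the auxiliary kernel $K_1$ introduced in the proof of Theorem \ref{derivs}, then dominating by a maximal-function expression. Positivity is immediate: since $g$ is monotone increasing, $(g(x+t)-g(x+s))/(t-s)\ge 0$ on $C_r\times C_r$, so $I_r(x)\ge 0$.

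For the upper bound, recall that $I_r(x)=(2\pi/[f])\,A_r^{(2)}K_1(x,x)$, with $K_1(x,y)=K(x,y)\,\wh{df}(0)/\wh{df}(y-x)$. Since $K\ne 0$ forces $f$ non-constant, $[f]=\wh{df}(0)>0$, and continuity of $\wh{df}$ (it is the Fourier transform of a finite measure) yields $r_0>0$ and $B<\infty$ with $|\wh{df}(0)/\wh{df}(u)|\le B$ for all $|u|\le 2r_0$. Inserting the spectral decomposition $K(x,y)=\sum_i\lambda_i\phi_i(x)\overline{\phi_i(y)}$ into the double integral defining $A_r^{(2)}K_1(x,x)$ and exchanging sum and integral by Tonelli (using $\sum_i\lambda_i<\infty$ and the Cauchy--Schwarz split $|\phi_i(x+t)\overline{\phi_i(x+s)}|\le\tfrac12(|\phi_i(x+t)|^2+|\phi_i(x+s)|^2)$), one arrives, for every $r\le r_0$, at
$$0\le I_r(x)\le\frac{2\pi B}{[f]}\sum_i\lambda_i(A_r|\phi_i|(x))^2\le\frac{2\pi B}{[f]}\sum_i\lambda_i(\cM\phi_i(x))^2=:G(x),$$
where $\cM$ denotes the Hardy--Littlewood maximal operator. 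Integrability of $G$ follows from the $L^2$ maximal inequality $\|\cM\phi_i\|_2\le C\|\phi_i\|_2=C$ combined with $\sum_i\lambda_i=\tr K<\infty$, giving $\int G\,dx\le(2\pi BC^2/[f])\tr K<\infty$.

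The only delicate point is the local boundedness of $\wh{df}(0)/\wh{df}(u)$ near $u=0$, but continuity of $\wh{df}$ together with the strictly positive value $\wh{df}(0)=[f]>0$ settles this cleanly. The resulting bound is uniform for $r\in(0,r_0]$, which is exactly what one needs to feed Lemma \ref{k(x)} into Lebesgue dominated convergence and conclude an $L^1$ estimate on $k$.
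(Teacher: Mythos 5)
Your proof is correct and follows essentially the same route as the paper: positivity from the monotonicity of $g$ (the paper just asserts this), the identity $([f]/2\pi)I_r(x)=(A_r^{(2)}K_1)(x,x)$ with $K_1(x,y)=K(x,y)\,\wh{df}(0)/\wh{df}(y-x)$, a uniform bound on the ratio $\wh{df}(0)/\wh{df}$ near the origin, the spectral decomposition of $K$, and domination by $\sum_i\lambda_i(M\phi_i)^2$ with integrability coming from the $L^2$ maximal inequality plus $\sum_i\lambda_i=\tr K<\infty$. The only differences are cosmetic (a general constant $B$ instead of the paper's choice of $2$, and a spelled-out Tonelli/Cauchy--Schwarz justification for exchanging sum and integral); the one small slip, $[f]=\wh{df}(0)$ rather than $\wh{df}(0)=[f]/\sqrt{2\pi}$, is a harmless normalization error that does not affect the argument.
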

\begin{proof}
Choose $r_0$ so that if $r < r_0$,  $|\widehat{df}(0)/\widehat{df}(x)| < 2$ when $|x| < 2r_0$. As in the proof of Lemma \ref{k(x)}, we have 
$$0\le([f]/2\pi) I_r(x) = (A_r^{(2)}K_1)(x,x) \le 2(A_r^{(2)}K)(x,x) \le2\sum_i \lambda_i |A_r \phi_i(x)|^2 \le 2\sum_i \lambda_i |M \phi_i(x)|^2 = G(x).$$
Since $||M\phi_i||_2 \le C||\phi_i||_2 = C$, $G$ is integrable.
\end{proof}
\begin{proposition}
The $L^1$ function $k$ is the distributional derivative of $g$.  In fact if $\psi$ is absolutely continuous with $\psi' \in L^1$,
$$ (\psi g)(-\infty) - (\psi g)(+\infty) + \int \psi'(x) g(x) dx = - \int \psi(x) k(x) dx.$$
\end{proposition}
Note that both $\psi$ and $g$ have limits at $\pm \infty$.
\begin{proof}
Using Lemma \ref{forLDCT}, if  $\psi$ is absolutely continuous with $\psi' \in L^1$ , $\lim_{r\to 0} \int I_r(x) \psi(x) dx = \int k(x) \psi(x) dx$.  To handle the left side of the equation, first suppose $\lim_{x \to \pm \infty} \psi(x) = 0$.
We then have $$\lim_{r\to 0} \int I_r(x) \psi(x) dx = \lim_{r\to 0} \int\int h(w) (g(x+ rw) - g(x-rw))(2rw)^{-1} \psi(x) dx dw=$$
$$- \lim_{r\to 0}\int h(w)\int ( \psi(x+rw) - \psi(x-rw))(2rw)^{-1}g(x) dx dw =$$
$$- \lim_{r\to 0}\int h(w)\int (2rw)^{-1} \int_{-rw}^{rw} \psi'(x+y)dy g(x)dx dw= $$
$$- \lim_{r\to 0}\int h(w) \int (2rw)^{-1} \int_{-rw}^{rw} g(x-y)dy \psi'(x)dx dw= - \int g(x)\psi'(x)dx.$$
We have used $\lim_{x \to \pm \infty} \psi(x) = 0$ to obtain the second equality.  Now drop the latter assumption and let $\lambda_\epsilon \in C_0^\infty$ with $\lambda_\epsilon(x) =\int_{-\infty}^x (\phi_\epsilon(t-L ) - \phi_\epsilon(t+L ))dt$ where $\phi_\epsilon(t) = \epsilon^{-1} \phi(t/\epsilon)$ and  $\phi \in C_0^\infty(\mathbb{R})$ satisfies $0\le \phi(t)$ with $\int \phi(t) dt = 1$.  We now have 
$$\int(\lambda_\epsilon \psi)'(x)g(x)dx = -\int \lambda_\epsilon(x)\psi(x)k(x) dx.$$
Clearly $\int \lambda_\epsilon \psi' g dx \to \int_{-L}^L \psi'g dx$ and $\int \lambda_\epsilon \psi k dx \to \int_{-L}^L \psi k dx$ as $\epsilon \to 0$.  It is also easy to see that $\int \lambda_\epsilon' \psi g dx \to (\psi g)(-L) - (\psi g)(L)$.  The result follows after taking $L\to \infty$.
\end{proof}

\begin{theorem}\label{gac}
    $g$ is absolutely continuous and $g' = k$ , a.e.
\end{theorem}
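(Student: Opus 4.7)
The plan is to combine the distributional identity from the preceding proposition with the monotonicity of $g$ in order to identify the Lebesgue--Stieltjes measure $dg$ with $k(x)\,dx$, after which absolute continuity follows from a standard characterization.

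First I would specialize the identity of the preceding proposition to test functions $\psi \in C_c^\infty(\bbR)$. These are absolutely continuous with $\psi'\in L^1$, and the boundary terms $(\psi g)(\pm\infty)$ vanish, so the identity reduces to
$$ \int \psi'(x)\, g(x)\, dx = -\int \psi(x)\, k(x)\, dx, $$
which is precisely the statement that the distributional derivative of $g$ equals the $L^1$ function $k$.

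Next I would use that $g$ is bounded and monotone increasing (and continuous), as established in \cite{HK} and recalled in the proof of Theorem \ref{derivs}. As a bounded monotone function $g$ has an associated finite positive Borel measure $dg$, and the classical integration-by-parts formula gives
$$ \int \psi'(x)\, g(x)\, dx = -\int \psi(x)\, dg(x) $$
for every $\psi \in C_c^\infty(\bbR)$. Comparing the two displays yields $\int \psi\, dg = \int \psi\, k\, dx$ for all such $\psi$, and a density argument (or the Riesz representation theorem) forces the equality of finite Borel measures $dg = k(x)\, dx$.

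Finally, I would invoke the standard real-analysis fact that a monotone function is absolutely continuous if and only if its Lebesgue--Stieltjes measure is absolutely continuous with respect to Lebesgue measure. Since $k \in L^1$ by Lemma \ref{forLDCT}, the measure $k(x)\,dx$ is absolutely continuous; hence so is $dg$, giving $g(b)-g(a)=\int_a^b k(x)\,dx$ for all $a<b$, which is the definition of absolute continuity of $g$. The Lebesgue differentiation theorem then produces $g'(x)=k(x)$ almost everywhere. The substantive analytic work was already carried out in the earlier lemmas; the remaining step is this measure-theoretic packaging, and I do not anticipate any real obstacle here.
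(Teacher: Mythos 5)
Your argument is correct, and it reaches the conclusion by a somewhat more abstract route than the paper's. The paper applies the preceding proposition directly to the family $\psi_\epsilon(x)=1-\int_{-\infty}^x\phi_\epsilon(y-t)\,dy$, which increases monotonically to $1_{(-\infty,t]}$; passing to the limit $\epsilon\to 0$ (using $0\le\psi_\epsilon\le1$ and the continuity and monotonicity of $g$) produces the fundamental-theorem-of-calculus identity $g(t)-g(-\infty)=\int_{-\infty}^t k(x)\,dx$ in one stroke, from which absolute continuity and $g'=k$ a.e.\ are immediate. You instead first restrict to $\psi\in C_c^\infty$, identify the Lebesgue--Stieltjes measure $dg$ with $k(x)\,dx$ via Riesz representation, and then appeal to the standard equivalence between absolute continuity of a monotone function and absolute continuity of its Stieltjes measure, finishing with Lebesgue differentiation. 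Both routes are sound; the paper's is more self-contained and elementary (no appeal to Riesz uniqueness or to the monotone-function characterization of absolute continuity), while yours is shorter to write because it delegates the last steps to standard theorems. One point worth making explicit in your version: the two measures being compared are finite positive Borel measures---$dg$ because $g$ is bounded and increasing, and $k\,dx$ because $k\ge 0$ by Lemma~\ref{forLDCT} and $k\in L^1$---so the Riesz-representation uniqueness argument applies cleanly.
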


\begin{proof}
Fix $t$ and let $\psi_\epsilon(x) = 1-\int_{-\infty}^x \phi_\epsilon(y-t) dy$.   
$$(\psi_\epsilon g)(-\infty) -  (\psi_\epsilon g)(\infty)+ \int \psi_\epsilon'(x) g(x) dx = - \int \psi_\epsilon(x) k(x) dx.$$
Then since $\psi_\epsilon(x) \to 1_{(-\infty,t]}(x)$ except at $x=t$, $0 \le \psi_\epsilon \le 1$, and $g$ is continuous, bounded and increasing, taking $\epsilon \to 0$ gives
$$g(-\infty) -g(t) =   -\int_{-\infty}^t k(x) dx$$
\end{proof}
This gives the result.\\
The first part of the following Corollary is known from \cite {HK}.   
\begin{corollary} \label{positivederivs}
With our standard assumptions including $K\ne 0$, $f$ and $g$ are strictly monotone.  More precisely $f'$ and $g'$ are positive a.e.
\end{corollary}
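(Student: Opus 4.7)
The plan is to prove $g'>0$ almost everywhere; the corresponding statement for $f'$ will follow by applying the same argument to $\tilde K = \mathcal{F} K \mathcal{F}^{-1}$ via the symmetry $\tilde K = i[-g(-P),f(Q)]$, and strict monotonicity of $f$ and $g$ is then immediate from continuity. Combining Lemma \ref{k(x)}, the computation following it in the proof of Theorem \ref{derivs}, and Theorem \ref{gac}, one obtains the pointwise representation
$$
g'(x) = \frac{2\pi}{[f]}\sum_i \lambda_i\,|\phi_i(x)|^2 \quad\text{a.e.,}
$$
where $\{\phi_i\}$ is an orthonormal set of eigenfunctions of $K$ with positive eigenvalues $\lambda_i$. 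Thus $g'(x)=0$ if and only if every $\phi_i(x)=0$, and the assumption $g'=0$ on a set $E$ with $|E|>0$ places the range of $K$ inside $L^2(E^c)$. By self-adjointness, $\chi_E(Q)\,K = K\,\chi_E(Q) = 0$.

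The next step is to show this forces $g$ to be constant on some open interval. Because $K$ is trace class (hence Hilbert--Schmidt) and $f' \in L^1$ by Theorem \ref{derivs}, the kernel formula
$$
K(x,y) = \frac{1}{\sqrt{2\pi}}\,\frac{g(x)-g(y)}{x-y}\,\widehat{f'}(y-x)
$$
is a genuine $L^2(\mathbb{R}^2)$ identity, so $\chi_E K = 0$ forces $K(x,y)=0$ for a.e.\ $(x,y)\in E\times\mathbb{R}$. Continuity of $\widehat{f'}$ and the fact that $\widehat{f'}(0)=[f]>0$ supply a $\delta>0$ on which $\widehat{f'}$ does not vanish; hence for a.e.\ $x \in E$ we must have $g(y)=g(x)$ for a.e.\ $y \in (x-\delta,x+\delta)$, and continuity of $g$ promotes this to identity everywhere on the interval. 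Consequently the open set $U$ on which $g$ is locally constant contains a.e.\ point of $E$, and so possesses a connected component $J$ of positive length on which $g \equiv c$ for some constant $c$.

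The contradiction comes from a Fourier multiplier argument. Since $g'=0$ on $J$ we still have $K\chi_J(Q) = 0$, so for $\psi \in L^2(J)$ the identity $K\psi = 0$ yields $(c-g(Q))f(P)\psi = 0$; by monotonicity of $g$ and maximality of $J$, the set $\{g=c\}$ equals $\bar J$, so $f(P)\psi$ is supported on $\bar J$ and $f(P)$ leaves $L^2(J)$ invariant. The distributional convolution kernel $\check f = \mathcal{F}^{-1}f$ of $f(P)$ must then vanish on $\{x-y: x\in J^c,\,y\in J\}$. For bounded $J$ this set equals $\mathbb{R}\setminus\{0\}$, so $\check f$ is supported at the origin, $f$ is a polynomial, and boundedness forces $f$ constant; for a half-line $J$ the set is itself a half-line, which makes $f$ the boundary value of a bounded holomorphic function on a half-plane with real boundary values, so Schwarz reflection and Liouville yield $f$ constant. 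Either way we contradict $K\neq 0$. I expect the most careful bookkeeping to be in the second step, in passing from ``$K(x,y)=0$ a.e.\ on $E \times \mathbb{R}$'' to ``$g$ is literally constant on an open subinterval'': continuity of $g$, continuity of $\widehat{f'}$, and Fubini's theorem must all be combined so that the nested null sets can be absorbed and $\delta$ can be chosen uniformly.
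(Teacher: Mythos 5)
Your proof is correct but takes a genuinely different route from the paper's. The paper argues in the opposite order: strict monotonicity first, then $g'>0$ a.e. For strict monotonicity it uses a short propagation argument that never touches the eigenfunction formula: if $g(a)=g(b)$ with $a<b$, positivity of $K$ forces $K\psi=0$ for every $\psi$ supported in $[a,b]$, hence $K(x,y)=0$ for $y\in[a,b]$ and $0<|x-y|\le\delta_0$ (with $\delta_0$ chosen so $\widehat{f'}$ is nonvanishing there), so $g$ is constant on the strictly larger interval $[a,b]+(-\delta_0,\delta_0)$; iterating forces $g$ constant on $\mathbb{R}$, contradicting $K\neq0$. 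For $g'>0$ a.e.\ the paper cites from \cite{HK} that $g^{-1}$ is absolutely continuous and combines the chain rule $(g^{-1})'(g(x))\,g'(x)=1$ with a short measure-theoretic argument to absorb the exceptional null sets. You instead attack $g'>0$ a.e.\ head-on from the pointwise identity $g'(x)=(2\pi/[f])\sum_i\lambda_i|\phi_i(x)|^2$, which does follow by combining Lemma \ref{k(x)}, the estimates in its proof, and Theorem \ref{gac}; you localize the range of $K$ away from $E=\{g'=0\}$, recover a constancy interval $J$ for $g$ by the same nonvanishing of $\widehat{f'}$ near $0$, and close with a support argument on $\check f$: support in $\{0\}$ for bounded $J$ (so $f$ is a bounded polynomial, hence constant) or in a half-line for unbounded $J$ (so $f$ is a real boundary value of an $H^\infty$ function, hence constant by reflection and Liouville). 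Your route is more self-contained --- it avoids the external citation that $g^{-1}$ is absolutely continuous --- and has the side benefit of making the eigenfunction formula for $g'$ explicit; the paper's propagation argument is shorter and more elementary but establishes only strict monotonicity by itself, still needing the \cite{HK} input for $g'>0$ a.e. Both versions correctly use that strict monotonicity follows from $g'>0$ a.e.\ once absolute continuity of $g$ (Theorem \ref{gac}) is in hand, which is exactly the direction you invoke.
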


\begin{proof}
   Suppose $g(a) = g(b)$ where $b > a$.  Then since $g$ is monotone, $g(x) - g(y) = 0$ if $x,y \in [a,b]$.  Thus $K\psi(x) = 0, a.e. \  x\in  [a,b]$ if $\mathrm{supp}\ \psi \subset [a,b]$.  Thus $(\psi,K\psi) =0$.  Since $K$ is positive this implies $K\psi = 0$ and since this is true for all $\psi$ with $\mathrm{supp} \ \psi \subset [a,b]$ , $K(x,y) = 0$ for $y\in [a,b]$ and $|x-y| >0$, in particular for $y\in [a,b]$ and $b< x < b+\delta_0$ or  $a-\delta_0 < x < a$ where $\delta_0$ is chosen so that $|\hat f'(u)| > 0$ if $|u| \le \delta_0$.  It follows that $g$ is constant in $[a,b] + (-\delta_0,\delta_0)$.  The strict monotonicity then follows by induction.

From \cite{HK} it follows that $g^{-1}$ is absolutely continuous.  Thus if $x$ is a point where $g$ is differentiable and where $g^{-1}$ is differentiable at $g(x)$ then $(g^{-1})' (g(x)) g'(x) = 1$.  So $g'(x) > 0$.  If $A = \{x: g'(x) \text{exists}\}$ and $B=\{y:(g^{-1})'(y) \text{exists}\}$ then $g'(x) > 0$ for $x \in A\cap g^{-1}(B)$.  We have $g: \mathbb{R} \rightarrow I$, $I = (g(-\infty), g(\infty))$ and 
$g^{-1}: I \rightarrow \mathbb{R}$.  Then $|(A\cap g^{-1}(B))^c| = |A^c \cup (g^{-1}(B))^c| \le |A^c| + |(g^{-1}(B))^c| $.  Of course $|A^c| = 0$ and $(g^{-1}(B))^c = g^{-1}(I \setminus B)$. Since $|I\setminus B| = 0$ and $g^{-1}$ is absolutely continuous, this has measure zero.  This completes the proof.
\end{proof}

\section{Commutator positivity and matrix monotonicity}

In this section we will prove Theorem \ref{monotone}.

We have 
$$d/dt(e^{itf(P)} g(Q) e^{-itf(P)})  = e^{itf(P)} i[f(P),g(Q)]e^{-itf(P)} \ge 0.$$
Thus $$e^{itf(P)} g(Q) e^{-itf(P)} \ge g(Q)$$
for $t\ge 0$.  If $G$ is a matrix monotone function defined on an open set containing the closure of the range of $g$ then for $t\ge 0$

$$ G(e^{itf(P)} g(Q) e^{-itf(P)})  \ge G(g(Q))$$
or 
$$e^{itf(P)}G(g(Q))e^{-itf(P)} \ge G(g(Q)) $$
It follows that the derivative 
$$d/dt e^{itf(P)}G(g(Q))e^{-itf(P)} |_{t=0} \ge 0$$
or $$i[f(P),(G\circ g)(Q)] \ge 0.$$
Now with $G\circ g$  replacing $g$, if $F$ is a matrix monotone function defined on an open interval  containing the range of $f$ the same proof shows that 
$$i[(F\circ f)(P),(G\circ g)(Q)] \ge 0.$$
This proves the result.


\section{Proof of Theorem \ref{expdecay}}
Let us restate Theorem \ref{expdecay}.  For convenience we state the theorem with a change of symbols $\hat{a} \rightarrow a$.  Note that we have already shown that if $K\ge 0$ and $K\ne 0$, that $f$ and $g$ are absolutely continuous with derivatives positive a.e. (see Theorem \ref{derivs} and Corollary \ref{positivederivs}).  
\begin{theorem}
Suppose $K=i[f(P),g(Q)] \ge 0, K\ne 0$, $f$ and $g$ real, bounded, and measurable.  Suppose in addition $\int f'(\xi) e^{2b\xi} d\xi < \infty$ for $b\in (0,a) $.  Then $g\in K_a$.
\end{theorem}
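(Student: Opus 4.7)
The plan is to follow the Herbst--Kriete argument for Theorem \ref{assumehalf} from \cite{HK}, replacing its use of the Herglotz (tanh) representation of $f$ with direct manipulation of the analytic extension of $\widehat{f'}$ that the hypothesis provides. Since $f'\ge 0$ is in $L^1(\bbR)$ by Theorem \ref{derivs} and Corollary \ref{positivederivs}, the condition $\int f'(\xi)e^{2b\xi}\,d\xi<\infty$ for all $b\in(0,a)$ is exactly the statement that $\widehat{f'}(\zeta)$ extends analytically to the open upper half-strip $\{0<\mathrm{Im}\,\zeta<2a\}$, continuously to its real boundary, with bounded values on each horizontal line in the strip and Riemann--Lebesgue decay as $|\mathrm{Re}\,\zeta|\to\infty$ along such lines. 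This supplies the piece of analytic input that was previously packaged into the assumption $f\in K_a$.

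Next, for each $\beta\in[0,2a)$ I form the auxiliary function $f_\beta(t):=\int_{-\infty}^t e^{\beta s}f'(s)\,ds$; this is bounded and monotone increasing by hypothesis, and a one-line Fourier calculation gives $\widehat{f_\beta'}(\xi)=\widehat{f'}(\xi+i\beta)$. The bounded commutator $K^{(\beta)}:=i[f_\beta(P),g(Q)]$ therefore has, by (\ref{kernels}), the kernel
$$K^{(\beta)}(x,y)=\frac{1}{\sqrt{2\pi}}\,\frac{g(x)-g(y)}{x-y}\,\widehat{f'}(y-x+i\beta).$$
For $\phi,\psi$ in a dense Schwartz-type subspace, the matrix element $\beta\mapsto(\phi,K^{(\beta)}\psi)$ is analytic in the complexified strip $\mathrm{Re}\,\beta\in(0,2a)$, with boundary value $(\phi,K\psi)$ at $\beta=0$ which is $\ge 0$ when $\phi=\psi$. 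Modeled on the proof of Theorem \ref{assumehalf}, the next step is to exploit this combination of boundary positivity and analyticity in $\beta$ to pull the analytic continuation back to $g$ itself. Pairing with test vectors concentrated near chosen real points $x_0,y_0$ transfers the analytic $\beta$-dependence onto $(g(x_0)-g(y_0))/(x_0-y_0)$ times a known analytic factor, yielding after a limiting argument an integral representation of $g(z)$ on $\{0<\mathrm{Im}\,z<a\}$. Schwarz reflection, boundedness of $g$ on $\bbR$, and the Herglotz sign condition derived from $K\ge 0$ then place $g\in K_a$.

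The main obstacle is extracting the pointwise analytic continuation of $g$ together with the correct sign of $\mathrm{Im}\,g$ inside the strip, rather than merely analyticity of operator matrix elements. Without the explicit Herglotz form of $f$ one must control the ratio $\widehat{f'}(\cdot+i\beta)/\widehat{f'}(\cdot)$ that governs how the $\beta$-shift factors through $K$; this ratio may develop zeros in the shifted strip, and I expect to handle these either by restricting to test vectors whose Fourier supports avoid them or by a Blaschke-style factoring that exploits $f'$ being a positive finite measure. The Fourier-symmetric identity $\tilde K=i[-g(-P),f(Q)]\ge 0$ introduced just before (\ref{kernels}) provides an independent sign check, and is exactly what supplies the stated corollary by a direct symmetric application of the theorem in the $\tilde K$ variables.
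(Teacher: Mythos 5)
You correctly isolate the key analytic input supplied by the hypothesis (namely that $\widehat{f'}$ extends analytically to $\{0<\mathrm{Im}\,\zeta<2a\}$, with square-integrable boundary traces on horizontal lines), and the one-line computation $\widehat{f_\beta'}(\xi)=\widehat{f'}(\xi+i\beta)$ is a clean way to encode that fact. You also correctly anticipate the corollary by Fourier symmetry. However, the central step of the argument is left as a declared obstacle rather than resolved, and I do not see how it can be resolved as sketched.

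The $\beta$-shift acts only on the $\widehat{f'}(y-x)$ factor of the kernel; the factor $\bigl(g(x)-g(y)\bigr)/(x-y)$ is unchanged, and $x,y$ remain real. Pairing $K^{(\beta)}$ against localized test vectors near $x_0,y_0$ therefore produces (up to normalization) the \emph{real} number $\bigl(g(x_0)-g(y_0)\bigr)/(x_0-y_0)$ times the analytic quantity $\widehat{f'}(y_0-x_0+i\beta)$ — nothing about $g$ at non-real points is generated, so there is no "integral representation of $g(z)$ on $\{0<\mathrm{Im}\,z<a\}$" to be had from this alone. Crucially, you have no a priori analytic object on the $g$ side to continue. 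The paper supplies exactly this missing object by first replacing $g$ with its Gaussian mollification $g_\epsilon=\psi_\epsilon*g$, which is \emph{entire} by construction, so the identity
$$K_\epsilon(z,w)=\frac{1}{\sqrt{2\pi}}\,\frac{g_\epsilon(z)-g_\epsilon(w)}{z-w}\,\widehat{f'}(w-z)$$
makes sense for complex $z,w$ and can be established by analytic continuation once the kernel $K_\epsilon(z,w)$ is itself shown analytic (which is where the Hilbert--Schmidt estimate $\tr\bigl(K^{1/2}e^{bP}K^{1/2}\bigr)<\infty$ of Lemma~\ref{propp} enters — the quantitative version of your "bounded values on each horizontal line"). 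Setting $w=x+iy$, $z=\bar w$ and using $K_\epsilon\ge 0$ then gives $\mathrm{Im}\,g_\epsilon(x+iy)\ge 0$ directly; Kato's representation and the $\epsilon\to 0$ compactness argument finish. Your proposal has no analogue of the mollification step, no mechanism to produce $g$ at complex arguments, and no positivity input beyond the real boundary $\beta=0$ (indeed $K^{(\beta)}\ge 0$ for $\beta>0$ is not known and would amount to what is being proved), so the gap you flag is genuine and not closed by the "restricting test-vector Fourier supports / Blaschke factoring" alternatives you mention.
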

We first prove an important formula:
\begin{proposition} \label{traceformula}
\begin{equation}
\widehat{f'}(y-x) =  (\sqrt{2\pi}/[g])\tr(e^{iPx}Ke^{-iPy}).  \label{widehat}
\end{equation}
\end{proposition}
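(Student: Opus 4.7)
The strategy is to evaluate both sides directly using the kernel formula. Because Theorem \ref{derivs} guarantees that $f$ and $g$ are absolutely continuous with $L^1$ derivatives, the kernel representation (\ref{kernels}) applies and $K$ is trace class. Since $e^{iPs}$ acts as the translation $(e^{iPs}\phi)(u) = \phi(u+s)$, the (trace-class) operator $e^{iPx}Ke^{-iPy}$ has integral kernel $(u,w)\mapsto K(u+x,\,w+y)$. The plan is to compute its trace and match it to $[g]\,\widehat{f'}(y-x)/\sqrt{2\pi}$.

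To get at the trace I would insert the spectral decomposition $K = \sum_k \lambda_k (\phi_k,\cdot)\phi_k$ with $\lambda_k>0$ and $\sum_k \lambda_k <\infty$. This gives
\begin{equation*}
\tr(e^{iPx}Ke^{-iPy}) \;=\; \sum_k \lambda_k\,(e^{iPy}\phi_k,\,e^{iPx}\phi_k) \;=\; \sum_k \lambda_k \int \overline{\phi_k(u+y)}\,\phi_k(u+x)\,du.
\end{equation*}
Cauchy--Schwarz bounds each inner integral by $\|\phi_k\|^2 = 1$, so the iterated sum-integral is absolutely convergent with majorant $\sum_k \lambda_k < \infty$, and Fubini lets me pull the sum inside. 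The resulting pointwise series $\sum_k \lambda_k \phi_k(u+x)\overline{\phi_k(u+y)}$ is the kernel $K(u+x,u+y)$, so by a.e.\ uniqueness of the integral kernel of $K$ it agrees with the expression from (\ref{kernels}), yielding $\tr(e^{iPx}Ke^{-iPy}) = \int K(u+x,u+y)\,du$.

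Substituting (\ref{kernels}) pulls the $u$-independent factor $\widehat{f'}(y-x)/\sqrt{2\pi}$ outside, leaving $\tfrac{1}{x-y}\int (g(u+x) - g(u+y))\,du$. Writing $g(u+x) - g(u+y) = \int_{u+y}^{u+x} g'(v)\,dv$ (absolute continuity from Theorem \ref{derivs}) and applying Tonelli to the nonnegative integrand gives $\int g'(v)(x-y)\,dv = (x-y)[g]$, so the integral equals $[g]$. Combining yields $\tr(e^{iPx}Ke^{-iPy}) = [g]\,\widehat{f'}(y-x)/\sqrt{2\pi}$, which rearranges to (\ref{widehat}); note that $[g] > 0$ because $g$ is monotone increasing and $K \ne 0$ forbids $g$ being constant. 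The main obstacle is precisely the step $\tr(e^{iPx}Ke^{-iPy}) = \int K(u+x,u+y)\,du$: for a generic trace-class integral operator the naive ``trace equals integral of the kernel on the diagonal'' formula can fail (this is exactly the delicacy handled by Brislawn's machinery invoked in Section 2), and routing through the spectral expansion is what makes the interchange clean, because absolute convergence then comes for free from $\sum_k \lambda_k < \infty$.
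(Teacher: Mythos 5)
Your approach is genuinely different from the paper's: you work entirely in position space with the explicit kernel (\ref{kernels}) and the spectral expansion $K=\sum_k\lambda_k(\phi_k,\cdot)\phi_k$, whereas the paper works in momentum space, first assumes $f$ smooth so that $\tilde K(\xi,\eta)$ is (uniformly) continuous, invokes the Duflo trace-on-the-diagonal theorem, integrates against $e^{i\xi(x-y)}$, and then strips off the smoothness assumption by a convolution argument. Your route, if it worked cleanly, would be shorter and would avoid the smoothing step altogether. The reduction $\tr(e^{iPx}Ke^{-iPy})=\sum_k\lambda_k\int\overline{\phi_k(u+y)}\,\phi_k(u+x)\,du$, the Tonelli justification for moving the sum inside, and the final integral $\int K(u+x,u+y)\,du=[g]\widehat{f'}(y-x)/\sqrt{2\pi}$ are all correct.

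There is, however, a real gap in the step where you identify $\sum_k\lambda_k\phi_k(u+x)\overline{\phi_k(u+y)}$ with $K(u+x,u+y)$ ``by a.e.\ uniqueness of the integral kernel.'' Uniqueness of the Hilbert--Schmidt kernel gives $\sum_k\lambda_k\phi_k(v)\overline{\phi_k(w)}=K(v,w)$ for a.e.\ $(v,w)\in\mathbb{R}^2$. For a fixed pair $(x,y)$ you are restricting this to the line $\{(u+x,u+y):u\in\mathbb{R}\}$, which is a Lebesgue-null subset of $\mathbb{R}^2$; an a.e.\ identity on the plane says nothing about a particular such line. (This is exactly the kind of slicing problem that makes the naive ``$\tr A=\int A(u,u)\,du$'' fail in general; routing through the spectral series makes the Tonelli step painless, but it does not make the slice well-behaved.) Fubini in the variables $(v,d)$ with $d=w-v$ only tells you the identity holds along the line $w=v+d$ for a.e.\ $d$, so as written you only obtain (\ref{widehat}) for a.e.\ value of $y-x$. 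The fix is short but must be said: both sides of (\ref{widehat}) are continuous in $y-x$ --- the left side because $\widehat{f'}$ is the Fourier transform of an $L^1$ function, the right side because $\tr(e^{iPx}Ke^{-iPy})=\tr(K^{1/2}e^{iP(x-y)}K^{1/2})$ and $t\mapsto K^{1/2}e^{iPt}K^{1/2}$ is trace-norm continuous --- so a.e.\ equality in $d=y-x$ upgrades to equality for all $(x,y)$. With that one added paragraph your argument closes, and it is a legitimate alternative to the paper's Duflo-plus-smoothing route.
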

Here $[g] = g(\infty) - g(-\infty)$.
\begin{proof}

First assume that $f$ is smooth with bounded derivatives of all orders.  Then 
\begin{equation}\label{tildekernel}
    \frac{1}{\sqrt{2\pi}} \frac{f(\xi) - f(\eta)}{\xi-\eta} \widehat{g'}(\xi-\eta)  = \tilde {K} (\xi,\eta)
\end{equation}
where $\tilde {K}$ represents the operator $K$ in the sense that $ \widehat{K \psi}(\xi)  = \int \tilde{K}(\xi,\eta) \hat {\psi}(\eta) d\eta.$  In other words, $\tilde K(\xi,\eta)$ is the integral kernel of the operator $\mathcal{F}K\mathcal {F}^{-1}$.  It is clear from this formula that $\tilde{K}$ is uniformly continuous.    We have 

\begin{equation}\label{diagg}
([g]/2\pi)f'(\xi) = \tilde{K}(\xi,\xi). 
\end{equation}
In particular we have $[f][g]/2\pi = \int \tilde{K}(\xi,\xi)d\xi = \tr K$. \\ 
We are using the fact that for a trace class operator $A$ with continuous integral kernel $G(x,y)$, we have $\tr A= \int G(x,x) dx $.  This is a special case of a result in \cite{D}.  For further generalizations see \cite{B1,B2}. \\
Multiplying both sides of (\ref{diagg}) by $e^{i\xi(x-y)} $ and integrating we get

$$([g]/2\pi) \int f'(\xi)e^{-i\xi(y-x)} d\xi = \int e^{i\xi x}\tilde{K}(\xi,\xi) e^{-i\xi y}d\xi $$ or

$$\hat{f'}(y-x) = (\sqrt{2\pi}/[g])\tr (e^{iP x}Ke^{-iPy}).$$
To get rid of the assumption of smoothness of $f$, we convolve $f$ with a non-negative function $\psi$ in the Schwartz space $\mathcal{S}$.
Then the equation takes the form $$\widehat{f'}(v-u)\sqrt{2\pi}\hat{\psi}(u-v) =  (\sqrt{2\pi}/[g])\int \tr (e^{iPu}e^{iQa}Ke^{-iQa}e^{-iPv}) \psi(a) da.$$ \\
Continuing we have 

 $$ \widehat{f'}(v-u)\sqrt{2\pi}\hat{\psi}(u-v) =  (\sqrt{2\pi}/[g])\int e^{ia(u-v)}\tr (e^{iPu}Ke^{-iPv}) \psi(a) da =  $$  $$(\sqrt{2\pi}/[g])\tr (e^{iPu}Ke^{-iPv}) \sqrt{2\pi}\hat{\psi}(v-u). $$
 Taking $\psi$ to be a Gaussian we have that $\hat{\psi} > 0$ which gives the result.

  \end{proof}

\begin{lemma} \label{propp}
Suppose $f$ and $g$ are as in Theorem \ref{expdecay}. Then 

$$\tr(K^{1/2} e^{bP} K^{1/2}) < \infty$$ for $0\le b <2a$ and  $$\widehat{f'}(z) =  (\sqrt{2\pi}/[g])\tr(K^{1/2} e^{-iPz}K^{1/2})$$ 
for $0 \le $ Im $z < 2a$. 

\end{lemma}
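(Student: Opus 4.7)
The two assertions of the lemma coincide at $z = ib$, so I will handle them together via a Gaussian regularization of Proposition~\ref{traceformula}. For $\epsilon > 0$ and $z \in \mathbb{C}$ let $M_\epsilon(z) := e^{-\epsilon P^2 - izP}$, a bounded multiplication operator on the momentum side that depends holomorphically on $z$ in operator norm. Consequently
$$T_\epsilon(z) := \tr(K^{1/2} M_\epsilon(z) K^{1/2}) = \tr(K M_\epsilon(z))$$
is an entire function of $z$ for each fixed $\epsilon > 0$.

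The first step is to identify $T_\epsilon$ on the real axis. Writing $e^{-\epsilon P^2} = \int g_\epsilon(s) e^{-isP}\,ds$ with $g_\epsilon(s) = (4\pi\epsilon)^{-1/2} e^{-s^2/(4\epsilon)}$, and using that $s \mapsto K e^{-iP(s+t)}$ is trace-norm continuous with $g_\epsilon \in L^1$, Fubini lets us pull the trace inside the integral. Proposition~\ref{traceformula} and the Fourier convolution theorem then give
$$T_\epsilon(t) = \int g_\epsilon(s) \tr(K e^{-iP(s+t)})\,ds = ([g]/\sqrt{2\pi}) (g_\epsilon * \widehat{f'})(t) = ([g]/2\pi) \int e^{-\epsilon \xi^2 - it\xi} f'(\xi)\,d\xi.$$
Since the right-hand side is entire in $z$, the identity theorem upgrades this to
$$T_\epsilon(z) = ([g]/2\pi) \int e^{-\epsilon \xi^2 - iz\xi} f'(\xi)\,d\xi \qquad (z \in \mathbb{C}). \qquad (\ast)$$

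Next I would specialize $(\ast)$ to $z = ib$ with $0 \le b < 2a$. The multiplier $e^{-\epsilon \xi^2 + b\xi}$ increases pointwise to $e^{b\xi}$ as $\epsilon \downarrow 0$, so with the spectral decomposition $K = \sum_i \lambda_i (\phi_i, \cdot)\phi_i$ and monotone convergence,
$$T_\epsilon(ib) = \sum_i \lambda_i \int e^{-\epsilon \xi^2 + b\xi} |\hat\phi_i(\xi)|^2\,d\xi \;\uparrow\; \tr(K^{1/2} e^{bP} K^{1/2}),$$
while the right-hand side of $(\ast)$ increases monotonically to $([g]/\sqrt{2\pi}) \widehat{f'}(ib)$. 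The hypothesis $\int f'(\xi) e^{2b'\xi}\,d\xi < \infty$ for $b' \in (0,a)$ is precisely $\int f'(\xi) e^{c\xi}\,d\xi < \infty$ for $c \in (0,2a)$, so this common limit is finite. This simultaneously yields the finiteness claim and the trace formula at the purely imaginary point.

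Finally, for general $z = s + ib$ with $b \in [0, 2a)$, set $A := e^{bP/2} K^{1/2}$, which is Hilbert-Schmidt by the step just completed. Then $A_\epsilon := e^{-\epsilon P^2/2} A \to A$ in Hilbert-Schmidt norm by dominated convergence on the multiplier (which is bounded by $1$ and converges pointwise to the identity), so
$$T_\epsilon(z) = \tr(A_\epsilon^* e^{-isP} A_\epsilon) \longrightarrow \tr(A^* e^{-isP} A) = \tr(K^{1/2} e^{-iPz} K^{1/2}),$$
while dominated convergence on the integral side of $(\ast)$ (dominated by $f'(\xi) e^{b\xi}$) gives the limit $([g]/\sqrt{2\pi}) \widehat{f'}(z)$. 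Equating the two limits produces the asserted identity. The main technical pitfall will be setting up the regularization carefully enough that the Fubini exchange, the spectral monotone convergence at $z = ib$, and the Hilbert-Schmidt convergence for general $z$ all apply cleanly; once those are in place, the analytic continuation from the real axis to the strip is routine.
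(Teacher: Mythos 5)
Your argument is correct and follows essentially the same route as the paper: both start from Proposition~\ref{traceformula}, insert the Gaussian regulator $e^{-\epsilon\xi^2}$ (the paper does it by integrating $\widehat{f'}(x)$ against the test function $\varphi(\xi)=e^{-\epsilon\xi^2}e^{b\xi}$, you do it by convolving with the heat kernel $g_\epsilon$), invoke monotone convergence at $z=ib$ to get finiteness of $\tr(K^{1/2}e^{bP}K^{1/2})$, and then pass to general $z$ by analyticity. Your version is a bit more explicit than the paper's one-line ``by analytic continuation'': you introduce the entire family $T_\epsilon(z)$, verify it on the real axis, extend by the identity theorem, and then justify the $\epsilon\downarrow 0$ limit for off-axis $z$ via Hilbert-Schmidt convergence of $A_\epsilon=e^{-\epsilon P^2/2}e^{bP/2}K^{1/2}\to e^{bP/2}K^{1/2}$ together with dominated convergence on the Fourier side. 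That extra bookkeeping is in fact the content the paper leaves implicit, so the proposal is a correct and slightly more detailed rendering of the same proof.
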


\begin{proof} (of Lemma \ref{propp})\\
We have $$\widehat{f'}(x) = \int e^{-i\xi x} f'(\xi)d\xi/\sqrt{2\pi} = $$ 
$$ (\sqrt{2\pi}/[g])\tr(e^{-iPx} K) = (\sqrt{2\pi}/[g])\tr(e^{-iPx} K^{1/2} K^{1/2}) = $$ 
$$ (\sqrt{2\pi}/[g])\tr(K^{1/2}e^{-iPx} K^{1/2}) . $$
Here we have used that $\tr(AB) = \tr(BA)$ when both $A$ and $B$ are Hilbert-Schmidt.  Multiplying by a function in $\mathcal{S}$ and integrating over $x$ gives  

$$\int \varphi (\xi) f'(\xi)d\xi = (2\pi/[g])\tr(K^{1/2}\varphi(P) K^{1/2}) . $$  

We take $\varphi(\xi) = e^{-\epsilon \xi^2} e^{b\xi}$ with $\epsilon >0$ and $0 < b < 2a$ and let $\epsilon \downarrow 0$.  By the monotone convergence theorem, since $\int e^{b\xi} f'(\xi)d\xi < \infty$ we have $\tr(K^{1/2} e^{bP} K^{1/2}) < \infty$.  It follows that $e^{bP/2} K^{1/2}$ is a Hilbert-Schmidt operator as is its adjoint which we write as $K^{1/2}e^{bP/2}$.  We can think of the latter operator as the closure of  $K^{1/2}e^{bP/2}$ with its natural domain. (We remark that since $1_{(-\infty,0]}(P) e^{bP}K^{1/2}$ is Hilbert-Schmidt, so is $1_{[0,\infty)}(P) e^{bP}K^{1/2}$.)   By analytic continuation 

$$\widehat{f'}(z) =  (\sqrt{2\pi}/[g])\tr(K^{1/2}e^{-iPz} K^{1/2}) $$
 for $0\le$ Im $z < 2a$.  
 
 \end{proof}
 
 \begin{proof} (of Theorem \ref{expdecay})\\
 Again using $\tr(AB) = \tr(BA)$ for two Hilbert-Schmidt operators, it follows that 
 $$\widehat{f'}(w-z) = (\sqrt{2\pi}/[g])\tr(K^{1/2} e^{iPz} e^{-iPw}K^{1/2}) = (\sqrt{2\pi}/[g])\tr(e^{-iPw}Ke^{iPz}) = (\sqrt{2\pi}/[g])\tr(e^{iPz}Ke^{-iPw})$$ 
 whenever $0\le$- Im $z < a$, $0 \le $ Im $w < a$.
 For these values of $w$ and $z$,  $\mathcal{F}e^{iPz}Ke^{-iPw}\mathcal{F}^{-1}$ is a trace class operator, thus Hilbert-Schmidt.  The integral kernel of this operator is 
 
 $$\mathcal{F}e^{iPz}Ke^{-iPw}\mathcal{F}^{-1}(\xi,\eta) = e^{i\xi z} \tilde{K}(\xi,\eta) e^{-i\eta w}$$ which is thus square integrable.  The $x$ space kernel of the latter operator is thus 
 
 $$\int e^{i\xi z} \tilde{K}(\xi,\eta) e^{-i\eta w} e^{i(x\xi -y\eta)} d\xi d\eta/(2\pi).$$  
 Before going further we regularize $g$.  Let $\psi(t) = (1/\sqrt{2\pi}) e^{-t^2/2}$, and $\psi_\epsilon(t) = \epsilon^{-1}\psi(t/\epsilon)$.  Define $g_\epsilon = \psi_\epsilon * g$ and note that  $g_\epsilon $ is an entire function.  This replaces the operator $K$ with $K_\epsilon =\int e^{-iPa}Ke^{iPa}\psi_\epsilon(a) da$ which has kernel $K_\epsilon(x,y) = \int K(x-a,y-a) \psi_\epsilon(a) da$.

From the definition of $g_\epsilon(w)$ we have 

$$K_\epsilon(z,w) = \frac{1}{\sqrt{2\pi}} \frac{g_\epsilon(z) - g_\epsilon(w)}{z-w} \widehat{f'}(w-z).$$  
We set $w=x+iy$, $y>0$, and $z = x-iy$ to obtain

$$K_\epsilon(x-iy,x+iy)= \frac{1}{\sqrt{2\pi}}(\text{Im}{g_\epsilon(x+iy)}/y) \widehat{f'}(2iy). $$  But 

$$K_\epsilon(x-iy,x+iy)= \int e^{ix\xi} e^{y\xi}K_\epsilon(\xi,\eta)e^{y\eta} e^{-iy\eta}d\xi d\eta/(2\pi) $$
$$= \lim_{\delta \downarrow 0} \int e^{ix\xi} e^{y\xi}e^{-\delta \xi^2}K_\epsilon(\xi,\eta)e^{-\delta \eta^2}e^{y\eta} e^{-iy\eta}d\xi d\eta $$
which is non-negative because of the non-negativity of the operator $K_\epsilon$. Thus Im $g_\epsilon(z) \ge 0 $ if $a>$ Im $z\ge 0$.  Since $g_\epsilon(\bar{z}) = \overline{g_\epsilon(z)}$, Im $g_\epsilon(z) \cdot \mathrm{Im} z \ge 0$ for all $z$ with $| \mathrm{Im} z| <a$.
As Kato has shown in \cite{TK}, this implies the existence of a finite positive Borel measure $\mu_\epsilon$ and a real number $d_\epsilon$ so that 

$$g_\epsilon(x) = \int \tanh \hat{a}(x-t) d\mu_\epsilon(t) + d_\epsilon$$
where $\hat{a} = \pi/2a$.  We now use a compactness argument from \cite{HK}.  Taking the limits $x\to \pm \infty$ we find 

$$g(\pm \infty) = \pm \mu_\epsilon(\mathbb{R}) + d_\epsilon$$
 so that 

$$g(\infty) - g(-\infty) = [g] = 2\mu_\epsilon(\mathbb{R})$$
$$g(\infty) + g(-\infty) = 2d_\epsilon.$$
Thus $d_\epsilon = d$ is independent of $\epsilon$ as is $\mu_\epsilon(\mathbb{R}) = [g]/2$.  We now let $\epsilon = 1/n \rightarrow 0$.  We consider the measure $\mu_\epsilon$ as a measure on the two point compactification of $\mathbb{R}$ obtained by adding $\pm \infty$.  The function $\tanh$ extends to a continuous function on this compactification and we extend the measure $\mu_\epsilon$ to $[-\infty,\infty] $ by assigning zero measure to $\pm \infty$.  There is a subsequence of these measures that converge weakly to a measure $\mu$ which assigns say $c_\pm $ to $\pm \infty$.  Thus we have 

$$g(x) = \int_{\mathbb{R}}  \tanh \hat{a}(x-t) d\mu(t) + d+ c_- - c_+.$$  Thus $g\in K_a$.  If we have instead  $\int e^{-b \xi} f'(\xi) < \infty $ for all $b$ with $0\le b <2a$ define $\tilde{f}(\xi) = - f(-\xi)$.  Then by complex conjugation, $i[\tilde{f}(P),g(Q)] \ge 0$.  The result then follows from what we have just shown with $f$.  This completes the proof.
\end{proof}

\section{Finite rank K}

In this section we assume that $K$ is finite rank and thus 

$$K = \sum_{k=1}^N (\phi_k,\cdot)\phi_k$$
where the $\phi_k$ are linearly independent functions in $L^2(\mathbb{R})$.  Similarly $\tilde{K} : = \mathcal{F} K \mathcal{F}^{-1}$ has the representation
\begin{equation} \label{Ktilde}
    \tilde{K} = \sum_{k=1}^N (\widehat{\phi_k},\cdot)\widehat{\phi_k}.
\end{equation}
In \cite{HK} it was shown that that $\phi_k, f',g' \in \mathcal{S}$, the Schwartz space.  In this section we will show that in fact they are analytic in strips centered on the real axis. 

\begin{theorem}\label{KaKb}
Assume without loss that $f$ and $g$ are increasing.  Under the finite rank assumption there exist positive $r'$ and $r$ with $f\in K_{r'}$ and $g\in K_r$.  We have $\int g'(t) e^{2s|t|} dt < \infty$ for all $s<r'$ and $\int f'(t) e^{2s|t|}dt < \infty$ for all $s< r$.  The $\phi_k$ are analytic in the strip $S_r = \{|\mathrm{Im} z| < r\}$.  We have $rr'\le \pi/2$ and $\int |\phi_k(x+iy)|^2 e^{2s|x|} dx < \infty$ for $|y| < r, s < r'$.
\end{theorem}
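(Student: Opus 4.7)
The approach is to bootstrap exponential decay of $f'$ and $g'$ from the finite-rank hypothesis, then apply Theorem~\ref{expdecay} to conclude $f\in K_{r'}$ and $g\in K_r$ for some $r,r'>0$; the weighted integrability, the analyticity of $\phi_k$, and the bound $rr'\le\pi/2$ are then extracted from the kernel formulas, Kato's $\tanh$-representation, and Paley--Wiener on horizontal lines.

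Setting $x=y$ in $K(x,y)=\sum_k\phi_k(x)\overline{\phi_k(y)}$ and $\xi=\eta$ in the analogous representation of $\tilde K$ gives the diagonal identities
$$g'(x)=\frac{2\pi}{[f]}\sum_k|\phi_k(x)|^2,\qquad f'(\xi)=\frac{2\pi}{[g]}\sum_k|\hat\phi_k(\xi)|^2,$$
so exponential decay of $g'$ (resp.\ $f'$) is equivalent to simultaneous exponential decay of all the $\phi_k$ (resp.\ $\hat\phi_k$). The main algebraic lever is the off-diagonal identity
$$\sum_k\hat\phi_k(\xi)\overline{\hat\phi_k(\eta)}=\frac{1}{\sqrt{2\pi}}\,\frac{f(\xi)-f(\eta)}{\xi-\eta}\,\widehat{g'}(\xi-\eta).$$
Linear independence of the $\hat\phi_k$ lets us choose $N$ real points $\eta_1,\dots,\eta_N$ so that $M_{jk}=\overline{\hat\phi_k(\eta_j)}$ is invertible, and inverting the linear system yields
$$\hat\phi_k(\xi)=\sum_j(M^{-1})_{jk}\,\frac{1}{\sqrt{2\pi}}\,\frac{f(\xi)-f(\eta_j)}{\xi-\eta_j}\,\widehat{g'}(\xi-\eta_j),$$
which exhibits each $\hat\phi_k$ as a finite linear combination of products of $f$ with translates of $\widehat{g'}$. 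The position-space analogue gives a parallel representation of $\phi_k$ in terms of $g$ and $\widehat{f'}$.

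The main obstacle is producing an initial positive exponential rate. By Lemma~\ref{propp}, the condition $\int f'(\xi)e^{b\xi}d\xi<\infty$ is equivalent to $\tr(K^{1/2}e^{bP}K^{1/2})<\infty$, which in finite rank reduces to $\hat\phi_k(\xi)e^{b\xi/2}\in L^2$ for every $k$. My plan is to feed the inversion formula into this weighted $L^2$-integral: $f$ is bounded and $\widehat{g'}\in\mathcal{S}$, and the positivity of $K$ together with the rank-$N$ rigidity should force enough cancellation --- via integration by parts against the $\widehat{g'}$ factor and Cauchy--Schwarz --- to produce some $b>0$ in one tail direction. The other direction then follows from the parallel argument applied to $\tilde K$. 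Once any such $b$ is secured, Theorem~\ref{expdecay} delivers $g\in K_a$ for some $a>0$; applying the same reasoning with the symmetry \eqref{kernels} yields $f\in K_{r'}$. The inversion formula, now with $\widehat{g'}$ analytic in a strip, is then used to analytically continue $\hat\phi_k$ and to identify the maximal admissible $r$.

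Once $f\in K_{r'}$ and $g\in K_r$ are in hand, the weighted decay estimates $\int f'(t)e^{2s|t|}dt<\infty$ for $s<r$ and $\int g'(t)e^{2s|t|}dt<\infty$ for $s<r'$ follow from Kato's $\tanh$-representation combined with $rr'\le\pi/2$. The bound $rr'\le\pi/2$ itself is a consistency constraint on the analytic strips: the inversion formula can extend $\hat\phi_k$ only to the intersection of the strip $|\mathrm{Im}\,\xi|<r'$ inherited from $f$ and the shifted strips from $\widehat{g'}$, which is analytic in $|\mathrm{Im}\,u|<\pi/(2r)$, and combining the finite-rank factorization $\tilde K=\sum_k(\hat\phi_k,\cdot)\hat\phi_k$ with the explicit kernel on this joint analytic domain forces the product bound. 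Finally, the analyticity of $\phi_k$ on the strip $|\mathrm{Im}\,z|<r$ and the weighted bound $\int|\phi_k(x+iy)|^2 e^{2s|x|}\,dx<\infty$ for $|y|<r$, $s<r'$ come from the position-space analogue of the inversion formula together with Plancherel applied on horizontal contours within the strip of analyticity.
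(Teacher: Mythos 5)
Your outline shares the paper's starting point---the finite-rank kernel identities, the inversion via $N$ points of linear independence, and the eventual appeal to Theorem~\ref{expdecay}---but it has a genuine gap at exactly the step you flag as ``the main obstacle.'' You propose to get the initial exponential rate by feeding the inversion formula
$$\hat\phi_k(\xi)=\sum_j(M^{-1})_{jk}\,\frac{1}{\sqrt{2\pi}}\,\frac{f(\xi)-f(\eta_j)}{\xi-\eta_j}\,\widehat{g'}(\xi-\eta_j)$$
into the weighted $L^2$-integral and invoking ``integration by parts against $\widehat{g'}$, Cauchy--Schwarz, positivity, and rank-$N$ rigidity.'' This will not close: since $f$ is bounded, the prefactor $\frac{f(\xi)-f(\eta_j)}{\xi-\eta_j}$ decays only like $1/\xi$, so the exponential decay of $\hat\phi_k$ is controlled by that of $\widehat{g'}$, which in turn (via $g'\propto\sum|\phi_k|^2$ and Plancherel) depends on exponential decay of the $\phi_k$. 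Knowing only $\phi_k, f', g'\in\mathcal S$ is strictly weaker than any exponential rate---a Schwartz function can decay slower than every exponential---so there is nothing for Cauchy--Schwarz or positivity to latch onto, and the argument as stated is circular.

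The paper breaks this circularity differently: it does not attempt to produce a one-shot exponential rate, but instead sets up a self-consistent \emph{inductive Cauchy estimate} on derivatives. Differentiating $\gamma_j=(g(x)-g(y_j))/(x-y_j)\cdot\widehat{f'}(y_j-x)$ gives bounds of the form $\|\phi_j^{(n)}\|_\infty\lesssim\sum_k\binom{n}{k}\|g^{(k)}\|_\infty\|\widehat{\xi^{n-k}f'}\|_\infty$, and the diagonal identities $g'\propto\sum|\phi_j|^2$, $f'\propto\sum|\hat\phi_j|^2$ feed $\|g^{(k)}\|_\infty$ and $\|\widehat{\xi^{m}f'}\|_\infty$ back into $\|\phi_j^{(l)}\|_\infty$ and $\|\phi_j^{(l)}\|_2$ for $l<n$. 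The induction $\|\phi_j^{(l)}\|\le aK^l l!$ then closes because the central-binomial savings $\sqrt{m+1}/2^m$ makes the sum converge. That factorial bound is what actually yields analyticity of $\phi_j$ in a strip of radius $r=K^{-1}$, hence exponential decay of $f'$ (Lemma~\ref{exponential decay}) and the entry point for Theorem~\ref{expdecay}. Your proposal has no substitute for this bootstrap. Secondarily, the derivation of $rr'\le\pi/2$ in the paper is much more direct than your description suggests: once $g\in K_r$ has the Kato $\tanh$-representation with measure $\mu_r$, the finiteness of $\int g'(x)e^{2sx}\,dx$ for $s<r'$ factors as $\int(\cosh\hat r x)^{-2}e^{2sx}\,dx\cdot\int e^{2st}\,d\mu_r(t)$, and the first factor already forces $r'\le\hat r=\pi/2r$; no additional consistency argument on overlapping strips is needed.
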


\begin{proof}
    
Define the vector $v(x) = <\phi_1(x),...,\phi_N(x)>$ in $\mathbb{C}^N$.  Note that the set $\{v(y): y\in \mathbb R\}$ spans $\mathbb {C}^N$. Otherwise there is a non-zero vector $w \in \mathbb{C}^N$ orthogonal to all $v(x), x\in \mathbb{R}$ in which case $\sum \overline{w_j}\phi_j(x) = 0 $ for all $x \in \mathbb{R}$  which contradicts linear independence. 
Choose $y_1,y_2,...,y_N$ so that $\{v(y_1),...,v(y_N)\}$ is a set of linearly independent vectors in $\mathbb{C}^N$.  And define (with $(\cdot,\cdot)$ the inner product in $\mathbb{C}^N$),

$$\gamma_j(x) = \frac{1}{\sqrt{2\pi}}\frac{g(x)-g(y_j)}{x-y_j}\widehat{f'}(y_j-x) = (v(y_j),v(x)) = \sum_k\phi_k(x)\overline {\phi_k}(y_j)$$

The matrix $M_{kj} = \overline{\phi_k(y_j)}$ is invertible since $\sum_j \overline{M_{kj}}c_j = 0$ for a non-zero vector $c$ implies $\sum _j c_jv(y_j) = 0$ which is not possible since the $v(y_j)$ are linearly independent. Thus is is easy to go back and forth between $\gamma_j$ and $\phi_j = \sum_k\gamma_k M^{-1}_{kj}$.  We will need a disjoint set of $N \ y_j$'s, call them $y_j'$ such that the $v(y_j')$ are also linearly independent.  Such a set can be found since $v(y)$ is continuous.  Thus we have both $\gamma_j$ and $\tilde{\gamma_j}$ with the latter constructed using the $y_j'$. The two sets of $y_j$'s helps to deal with the apparent singularity when $x=y_j$.   
We proceed by estimating derivatives using the fact that $f'$, $g'$, and the $\phi_j$'s are in $\mathcal{S}(\mathbb{R})$. Consider 
$$\gamma_j^{(n)}(x)  = \frac{1}{\sqrt{2\pi}}\sum_{k=0}^n \binom{n}{k}\Big(\frac{g(x) - g(y_j)}{x-y_j}\Big)^{(k)} i^{(n-k)} \widehat {\xi^{n-k}f'}(y_j-x)$$

We have $(g(x) - g(y))/(x-y) = \int_0^1 g'(x + t(y-x)) dt$.  Thus 

$$(d^k/dx^k)\frac{g(x) - g(y)}{x-y} = \int_0^1 (1-t)^k g^{(k+1)}(x +t(y-x))dt.$$  
We integrate by parts to find 

$$\int_0^1 (1-t)^k g^{(k+1)}(x + t(y-x))dt = (y-x)^{-1}\int _0^1 (g^{(k)}(x+t(y-x))-g^{(k)}(x)) k(1-t)^{k-1}dt$$ unless $k=0$ in which case we are back to $(g(y) - g(x))/(y-x)$. (Note the above even makes some sense when $k = 0$ (at least in the limit) since as $k\to 0 , k(1-t)^{k-1}$ approaches a delta function at $1$.)   
Thus we have

$$|\big((g(y) - g(x))/(x-y)\big)^{(n)}| \le 2|y-x|^{-1} ||g^{(n)}||_\infty.$$
$$||\big((g(y) - g(x))/(x-y)\big)^{(n)}1_{|x-y| >b}||_2 \le (2\sqrt{2}) b^{-1/2} ||g^{(n)}||_\infty.$$
We have 

$$|\gamma_j^{(n)}(x)| \le \frac{1}{\sqrt{2\pi}}2|x-y_j|^{-1} \sum_{k\le n}\binom{n}{k} ||g^{(k)}||_{\infty}||\widehat{\xi^{n-k}f'}||_\infty $$

In computing $||\phi_k^{(n)}||_\infty$ we can use $\gamma_j$ or $\tilde \gamma_j$.  Thus we must bound the minimum of $\sum_j |x-y_j|^{-1}$ and $\sum_j |x-y'_j|^{-1}$.  If  $S = \{y_1,...y_N \}$ and  $S' = \{y'_1,...y'_N \}$, we have $|x-y_j| \ge d(S, x)$ thus $\sum_j |x-y_j|^{-1} \le N/d(S, x)$.  Hence the minimum of the two sums is $\le N \min \{d(S, x)^{-1}, d(S', x)^{-1}\} \le 2Nd(S,S')^{-1}$. Thus we have 

\begin{equation} \label{phibound}
||\phi_j^{(n)}||_\infty \le C \sum_{k\le n}\binom{n}{k} ||g^{(k)}||_{\infty}||\widehat{\xi^{n-k}f'}||_\infty
\end{equation}
\begin{align*}
    &||\gamma_j^{(n)}1_{d(S, x)>b}||_2 \le ||\gamma_j^{(n)}1_{|x-y_j| >b}||_2 \le \\ 
&(2\sqrt{2}) b^{-1/2}\sum_{k\le n}\binom{n}{k} ||g^{(k)}||_\infty ||\widehat{\xi^{n-k}f'}||_\infty
\end{align*}
Since the intersection $\{d(S, x) \le b\}\cap d(S', x) \le b\}$ is null for small enough $b$, we have 
\begin{equation}
||\phi_j^{(n)}||_2  \le C\sum_{k\le n}\binom{n}{k} ||g^{(k)}||_\infty ||\widehat{\xi^{n-k}f'}||_\infty
\end{equation}\label{phiL2bound}

Using the equation 

\begin{equation} \label{gprime}
g'(x) = (2\pi/[f])\sum_j |\phi_j(x)|^2
\end{equation}
we obtain
\begin{equation}\label{gbounds}
||g^{(l+1)}||_\infty \le (2\pi/[f]\sum_j\sum _k \binom {l}{k} ||\phi_j^{(l-k)}||_\infty ||\phi_j^{(k)}||_\infty.
\end{equation}
$$||g^{(l+1)}||_1\le (2\pi/[f]\sum_j\sum _k \binom {l}{k} ||\phi_j^{(l-k)}||_2||\phi_j^{(k)}||_2.$$
From (\ref{tildekernel}) and (\ref{Ktilde})  we obtain 
$$f'(\xi) = (2\pi/[g])\sum_j|\widehat{\phi_j}(\xi)|^2.$$
Taking the Fourier transform we get 
$$\widehat{f'}(y-x) = (\sqrt{2\pi}/[g])\sum_j\int\phi_j(u+x)\overline{\phi_j(u+y)}du$$
After differentiating $k$ times and integrating by parts $l\le k$ times we learn that 
$$d^k/dx^k \widehat {f'}(y-x) = (-1)^l(\sqrt{2\pi}/[g]) \sum_j \int\phi_j^{(k-l)}(x+ u) \overline {\phi_j^{(l)}}(y+u)du$$ 
so that 

\begin{equation} \label{f'bound}
 ||\widehat {\xi^kf'}||_\infty \le (\sqrt{2\pi}/[g]) \sum_j ||\phi_j^{(k-l)}||_2 ||\phi_j^{(l)}||_2.
 \end{equation}

With $K\ge 1$ and $a\ge 1$ inductively assume 

\begin{equation}\label{induction}
||\phi_j^{(l)}||_\infty \le a K^l l! \ \text{for} \ l\le n-1 \ \text{and the same estimate for} \ ||\phi_j^{(l)}||_2. 
\end{equation}

\flushleft Using the estimate (\ref{f'bound}) for $\widehat {\xi^{n-k}f'}$ and the inductive assumption (\ref{induction}) we have for $l\le n-k$, 

\begin{equation}\label{f'bound2}
||\widehat {\xi^{n-k}f'}||_\infty \le (\sqrt{2\pi}/[g])N a^2 K^{n-k}l!(n-k-l)!
\end{equation}
as long as $l \le n-1$ and $n-k-l \le n-1$.  We use the $n!$ bounds 

$$\sqrt{2\pi n}(n/e)^n < n! < \sqrt{2\pi n}(n/e)^ne^{1/12n}$$.\
\flushleft in (\ref{f'bound2}). For $n-k$ even we take $l = (n-k)/2$. Otherwise take $l = (n-k -1)/2$. This is allowed if $n\ge 3$.  We obtain

\begin{equation} \label{f'bound3}
||\widehat {\xi^{n-k}f'}||_\infty \le K^{n-k}(\sqrt{2\pi}/[g])N a^2 C_0 \sqrt{(n-k +1)}(n-k)! /2^{n-k} 
\end{equation} \

where $C_0$ is independent of $n$ and $k$.  We choose $C_0$ large enough so that this estimate holds for all $n,k \ge 0$.

From (\ref{gbounds}) and (\ref{induction}) we have for $n \ge k\ge 1$

\begin{equation}\label{gbounds2}
||g^{(k)}||_\infty \le (2\pi/[f])Na^2K^{k-1}(k-1)!
\end{equation} \

We substitute the bounds (\ref{gbounds}) and (\ref{f'bound3}) into (\ref{phibound}) using the inductive assumption (\ref{induction}). We separate out the $k=0$ term below.  Let $C_1 = CC_0N(\sqrt{2\pi}/[g])$, and $C_2 = (2\pi/[f])N C_1$. We have \
\begin{align*}
&||\phi_j^{(n)}||_\infty \le C_1a^2n!\sum_{k\le n}||g^{(k)}||_\infty\sqrt{n-k+1}(K/2)^{n-k}/k!\\
& \le C_2a^4K^{n-1}n! \sum_{1\le k\le n} \sqrt{n-k+1}/k2^{n-k} + C_1a^2||g||_\infty K^n n!\sqrt{n+1}/2^n \\
&\le C_3a^4K^{n-1}n! + C_1a^2||g||_\infty K^n n!\sqrt{n+1}/2^n
\end{align*}
where $C_3 = C_2\sum_{m=0}^\infty \sqrt{m+1}/2^m$.
Choose $a$ so that $||\phi_j||_\infty \le a$, $||\phi_j||_2 \le a$.  (Note (\ref{phibound}) for $||\phi_j||_\infty$.) Choose $n_0$ so that $C_1a||g||_\infty2^{-n}\sqrt{n+1} \le 1/2$ for $n \ge n_0$.  Choose $K_0 >1 $ so that $||\phi_j^{(n)}||_\infty \le aK_0^n n!$, and $||\phi_j^{(n)}||_2 \le aK_0^nn! $ for $n\le n_0$.  Then for any $K\ge K_0$ and $K^{-1}C_3a^3 \le 1/2$, $||\phi_j^{(n)}||_\infty \le a K^n n!$ for all $n$.  Similarly for $||\phi_j^{(n)}||_2$. This completes the induction and thus $\phi_j$ has an analytic continuation to the strip $S_r$ with $r = K^{-1}$, bounded and in $L^2$ of any smaller strip.

\begin{lemma} \label{exponential decay}
Let $r$ be as above.  Then if $s<2r$, $\int e^{s|\xi|}f'(\xi) d\xi < \infty$.
\end{lemma}
\begin{proof} According to (\ref{f'bound3}) we have $|\widehat {\xi^n f'} (0)| \le C_4(K/2)^n \sqrt {n+1}n!$.  Thus  $$\int \sum_{n=0}^M ((s\xi)^{2n}/(2n)!) f'(\xi) d\xi \le C_4\sum_{n=0}^\infty (s/2r)^{2n}\sqrt{2n + 1}< \infty.$$  Thus by the monotone convergence theorem (since $f' \ge 0$) we have $\int \cosh(s\xi)f'(\xi)d\xi < \infty$.
\end{proof}

From Theorem \ref{expdecay} it follows that $g\in K_r$.  \\
\vspace{.3cm}
Let us complete the proof of Theorem \ref{KaKb}.  We notice that because of the symmetry between $f$ and $g$ we have the existence of an $r'>0$ so that $\hat\phi_j$ and $f$ have  analytic continuations to $S_{r'}$ with $f\in K_{r'}$ and $\int g'(x)e^{2s|x|} dx <  \infty$ when $s<r'$. 

From Kato's representation theorem there is a finite positive measure $\mu_r$ and real constant $c_r$ such that 

$$g(x) = \int\tanh\hat r (x-t) d\mu_r(t) + c_r $$ where $\hat r = \pi/2r$.

Thus $g'(x) = \hat r\int (\cosh(\hat r(x-t))^{-2} d\mu_r(t)$ and $$\int g'(x)e^{2sx} dx = \hat r\int(\cosh(\hat rx))^{-2}e^{2sx}dx \int e^{2st} d\mu_r(t) < \infty $$ for $|s| < r'$.  This implies $r'\le \hat r$ and $\int e^{2s|t|} d\mu_r(t)$
for $s<r'$.  In particular, since $\hat r = \pi/2r$, $rr'\le \pi/2$, which is the last statement of Theorem \ref{KaKb}.

Of course since $g\in K_{r_1}$ if $r_1 < r$  we can also write $g(x) = \int \tanh \hat r_1(x-t) d\mu_{r_1}(t) +c_{r_1} $.  We have

$$w*\lim_{y\uparrow r} \text{Im}g(x+iy)dx = 2rd\mu_r(x)$$.  

(This formula differs from the one given in \cite{TK} by an unimportant factor ). We learn that $\int e^{2s|x|} d\mu_{r_1}< \infty$ if $s< r'$.  If we take $0<r_1 < r$ we have as before $\text{Im}g(x+ir_1) \hat f'(-2ir_1) = r_1 \sum_j|\phi_j(x+ir_1)|^2$

Thus $$\infty > \int e^{2s|x|}d\mu_{r_1}(x) = r_1\sum_j(2r_1)^{-1}\int e^{2s|x|}|\phi_j(x+ir_1)|^2 dx(\hat{f'}(-2ir_1))^{-1}$$ 

for $s< r'$.  By varying $r_1$ we learn that 

$$\int|\phi_j(x+iy)|^2 e^{2s|x|}dx < \infty ; |y| <r, s < r'.$$

This completes the proof of Theorem \ref{KaKb}.
\end{proof}

In Theorem \ref{KaKb} we do not know how large $r$ and $r'$ are.  But what we do know (see the proposition below) is that if $f$ and $g$ are analytic and bounded in larger strips than we have estimated, then their imaginary parts are positive in the corresponding upper half strips:
\begin{proposition}\label{larger rr'}
Suppose $f$ and $g$ are as in Theorem \ref{KaKb}.  If $t \ge r$ and $t' \ge r'$ and $g$ and $f$ are analytic in $S_{ t}$ and $S_{ t'}$ respectively and bounded in any smaller strips, then $g\in K_{t}$ and $f\in K_{t'}$.  In addition $\int g'(x)e^{2s|x|} dx < \infty$ for $s < t'$ and $\int f'(\xi)e^{2s|\xi|}d\xi < \infty$ if $s < t$. We have $tt' \le \pi/2$.
\end{proposition}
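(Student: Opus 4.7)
The plan is to first extend each $\phi_k$ analytically from $S_r$ to $S_t$ (and symmetrically each $\widehat{\phi_k}$ from $S_{r'}$ to $S_{t'}$), and then to use the finite-rank expression for $K$ to produce positivity of the imaginary part of $g$ in the upper half of $S_t$. Concretely, writing the kernel of $K$ in two ways, continuing to $(z,w)=(x-iy,x+iy)$, and using the Schwarz reflection $g(x-iy)=\overline{g(x+iy)}$ together with the analytic extensions of the $\phi_k$, one arrives at the identity
$$
\sum_k |\phi_k(x-iy)|^2 \;=\; \frac{1}{\sqrt{2\pi}}\,\frac{\mathrm{Im}\,g(x+iy)}{y}\,\widehat{f'}(2iy),\qquad 0<y<t.
$$
Since $f'>0$ a.e.\ by Corollary \ref{positivederivs}, the factor $\widehat{f'}(2iy)=(2\pi)^{-1/2}\int f'(\xi)e^{2y\xi}\,d\xi$ is strictly positive wherever it is defined, so the identity forces $\mathrm{Im}\,g(x+iy)\ge 0$, i.e.\ $g\in K_t$. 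The statement $f\in K_{t'}$ follows symmetrically by passing to $\widetilde{K}$.

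The analytic extension of the $\phi_k$ is obtained by bootstrapping from Theorem \ref{KaKb}, which supplies $\phi_k\in H^2(S_\rho)$ for every $\rho<r$. The inductive step is: if $\phi_k\in H^2(S_\rho)$ for all $\rho<\sigma$, then Plancherel on each horizontal line yields $\int|\widehat{\phi_k}(\xi)|^2 e^{2\rho|\xi|}\,d\xi<\infty$ for every $\rho<\sigma$, so $f'(\xi)=(2\pi/[g])\sum_k|\widehat{\phi_k}(\xi)|^2$ satisfies $\int f'(\xi)e^{2\rho|\xi|}\,d\xi<\infty$ for every $\rho<\sigma$. Hence $\widehat{f'}$ is analytic and uniformly bounded on every sub-strip of $\{|\mathrm{Im}\,u|<2\sigma\}$. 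Substituting into
$$
\gamma_j(z)=\frac{1}{\sqrt{2\pi}}\,\frac{g(z)-g(y_j)}{z-y_j}\,\widehat{f'}(y_j-z),
$$
and using that $g$ is analytic and bounded on sub-strips of $S_t$ (so that the first factor is analytic in $S_t$ and $O(1/|z|)$ at infinity), one obtains $\gamma_j\in H^2(S_{\rho'})$ for every $\rho'<\min(t,2\sigma)$, and hence $\phi_k=\sum_j\gamma_j(M^{-1})_{kj}\in H^2(S_{\rho'})$ for the same $\rho'$. Iterating $\sigma\mapsto\min(t,2\sigma)$ starting from $\sigma=r$ exhausts $S_t$ in finitely many doubling steps.

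With $g\in K_t$ and $f\in K_{t'}$ in hand, the exponential decay of $g'$ is immediate from the symmetric bootstrap: $\widehat{\phi_k}\in H^2(S_{\rho'})$ for every $\rho'<t'$ gives $\int|\phi_k(x)|^2 e^{2\rho'|x|}\,dx<\infty$, so $g'(x)=(2\pi/[f])\sum_k|\phi_k(x)|^2$ satisfies $\int g'(x)e^{2s|x|}\,dx<\infty$ for $s<t'$; the bound on $f'$ is symmetric. The inequality $tt'\le\pi/2$ then follows exactly as at the end of the proof of Theorem \ref{KaKb}: Kato's representation gives $g'(x)=\hat t\int(\cosh\hat t(x-u))^{-2}\,d\mu_t(u)$ with $\hat t=\pi/(2t)$, and for $s>0$,
$$
\int g'(x)e^{2sx}\,dx\;=\;\hat t\int\frac{e^{2sv}}{\cosh^2\hat t v}\,dv\;\cdot\;\int e^{2su}\,d\mu_t(u);
$$
the first factor diverges for $s\ge\hat t$, so finiteness of the left side for all $s<t'$ forces $t'\le\hat t$, i.e.\ $tt'\le\pi/2$.

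The main obstacle is making the doubling step of the bootstrap rigorous: at each iteration one must upgrade pointwise analyticity to the $H^2$ bound on horizontal lines so that Plancherel can be re-applied. The crucial technical input is the uniform boundedness of $\widehat{f'}(y_j-z)$ on sub-strips of $\{|\mathrm{Im}\,z|<2\sigma\}$, which follows from the $L^1$ exponential decay of $f'$ at rate $2\rho$ for every $\rho<\sigma$ produced by the previous stage.
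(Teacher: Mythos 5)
Your proposal is correct, and the main engine --- the doubling bootstrap that upgrades $\phi_k$ from $H^2$ of sub-strips of $S_r$ to $H^2$ of sub-strips of $S_t$, using the Paley--Wiener identity $\widehat{\phi_{k,y}}(\xi)=e^{-y\xi}\widehat{\phi_k}(\xi)$ and the formula $f'(\xi)=(2\pi/[g])\sum_j|\widehat{\phi_j}(\xi)|^2$ to double the exponential weight on $f'$ at each stage, then feeding this back through $\gamma_j(z)=\frac{1}{\sqrt{2\pi}}\frac{g(z)-g(y_j)}{z-y_j}\widehat{f'}(y_j-z)$ --- is exactly the one in the paper (the paper phrases the $L^2$-on-lines claim via a contour integral rather than citing $H^2$ directly, but the content is identical). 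Where you genuinely diverge is the endgame: the paper, having established $\int f'(\xi)e^{s|\xi|}d\xi<\infty$ for all $s<2t$, simply cites Theorem \ref{expdecay} to conclude $g\in K_t$, whereas you bypass that theorem by equating the two analytic continuations of the finite-rank kernel at $(z,w)=(x-iy,x+iy)$ to get
\[
\sum_k|\phi_k(x-iy)|^2=\frac{1}{\sqrt{2\pi}}\,\frac{\mathrm{Im}\,g(x+iy)}{y}\,\widehat{f'}(2iy),
\]
from which $\mathrm{Im}\,g(x+iy)\ge 0$ is immediate since $\widehat{f'}(2iy)>0$. This short-circuit is legitimate and attractive here: because $K$ is finite rank, $K(z,w)=\sum_k\phi_k(z)\overline{\phi_k(\bar w)}$ gives the analytic continuation of the kernel for free once the $\phi_k$ are analytic, so you avoid the Gaussian mollification and the weak-$*$ compactness argument that Theorem \ref{expdecay} needs in the general (non--finite-rank) setting. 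The price is that your route does not generalize beyond finite rank, whereas the paper's quotation of Theorem \ref{expdecay} emphasizes that the positivity-of-imaginary-part step is a consequence only of the exponential decay of $f'$, independent of the rank. The treatment of the exponential decay of $g'$ and of $tt'\le\pi/2$ matches the paper.

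One small point worth being explicit about if you write this up: in the bootstrap you should note that $\frac{g(z)-g(y_j)}{z-y_j}$ has a removable singularity at $z=y_j$ (since $g$ is analytic there) and is $O(1/|z|)$ as $|\mathrm{Re}\,z|\to\infty$ uniformly on sub-strips, so that $\gamma_j$ really does land in $H^2$ and not merely $H^\infty$ of each sub-strip $S_{\rho'}$; and that the identity above is justified by uniqueness of analytic continuation from the real $2$-plane, where the two kernel formulas for $K$ manifestly agree.
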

 
\begin{proof} If we can show $\int f'(\xi)e^{s|\xi|} d\xi < \infty $
if $s< 2t$, $g\in K_t$ would follow from Proposition \ref{expdecay}. We have   $\int f'(\xi)e^{s|\xi|}d\xi < \infty $ if $s < 2r$ and  
\begin{equation}\label{gammaj}
    \gamma_j(x) = \frac{g(x) - g(y_j)}{x-y_j}\widehat{f'}(y_j-x).
\end{equation} 
We can analytically continue the right hand side of (\ref {gammaj}) into $S_{r_1}$ where $r_1 = \min\{t,2r\}$.  Thus $\phi_j$ is analytic in $S_{r_1}$ and is bounded in $S_{r_1 -\epsilon}$ with $\int |\phi_j(x+iy)|^2 dx \le C_\epsilon$ for all $|y| \le r_1 - \epsilon$.  It also follows that $|\phi_j(x+iy)| \rightarrow 0$ as $|x| \to \infty$ uniformly for $|y| < r_1 -\epsilon$.  Let $\hat{h}\in C_0^{\infty}(\mathbb{R})$.  By Cauchy's  theorem we have 
$$0 = \int_{-L}^L\bar{h}(x)\phi_j(x+iy)dx - \int_{-L}^L \bar{h}(x+iy)\phi_j(x)dx   $$
$$+  i\int_0^y[\bar{h}(-L+iu)\phi_j(-L -i(u-y)) -\bar{h}(L+iu)\phi_j(L -i(u-y)) ]du.$$
The integrals from $0$ to $y$ vanish in the limit $L\to \infty$ so that 
$$\int\bar{h}(x)\phi_j(x+iy)dx = \int \bar{h}(x+iy)\phi_j(x)dx   $$ or writing $\phi_{j,y}(x) = \phi_j(x+iy)$ and using the Plancherel theorem
$$\int\bar{\hat{h}}(\xi)\widehat{\phi_{j,y}}(\xi)d\xi = \int\bar{\hat{h}}(\xi)e^{-y\xi}\hat{\phi_j}(\xi)d\xi.$$  It follows that 
$$\widehat{\phi_{j,y}}(\xi)=e^{-y\xi}\hat{\phi_j}(\xi).$$
Since this holds for $|y|< r_1$ we see that in particular $ e^{s|\xi|}\hat{\phi_j} \in L^2$ for $s < r_1$. Since 
$$f'(\xi) = (2\pi/[g])\sum_j|\widehat{\phi_j}(\xi)|^2$$ it follows that $\int f'(\xi)e^{s|\xi|} d\xi < \infty $
if $s< 2r_1$ which is an improvement over $2r$ as long as $t > r$.  The proof is now complete if $t\le 2r$.  Otherwise we use (\ref{gammaj}) again to show that $\phi_j$ has an analytic continuation to $S_{r_2}, r_2 = \min\{t, 2r_1\}$ which is bounded in  $S_{r_2 -\epsilon}$ with $\int |\phi_j(x+iy)|^2 dx \le C_\epsilon$ for all $|y| \le r_2 - \epsilon$.  Thus continuing we find that if $n$ is the smallest integer such that $2^n r \ge t$, and $m \le n$, setting $r_m = \min\{t, 2^m r\}$ (so that $r_n = t$) , we obtain $\int |\phi_j(x+iy)|^2 dx \le C'_\epsilon$ for all $|y| \le r_m - \epsilon$  and finally $\int f'(\xi)e^{s|\xi|}d\xi < \infty$ if $s < 2t$. The result now follows from Proposition \ref{expdecay}.
\end{proof}

We remark that our method of proof Theorem \ref {KaKb} cannot predict the correct size of the strips.  This is because the positivity of the commutator is not used in our proof of analyticity but rather only to prove the positivity of the imaginary part of $g$ (or $f$) in the upper half strip of analyticity.  If we just assume the finite rank property, then the proof of analyticity still works as given here but the following is an example where the commutator has finite rank but is neither positive nor negative and has $f \in K_a$ and $g \in K_b $  with $0 < ab< \pi/2$.  
Take $g(x) = \tanh x$ and $f(\xi) = \tanh (\alpha_1 \xi) + \beta \tanh (\alpha_2 \xi)$  where $\alpha_1 = \pi/2$, $\alpha_2 = \pi$, and $\beta$ is positive.  The rank three commutator $i[f(P),g(Q)]$ can be written 
$$ i[f(P),g(Q)] = \pi^{-1}(\phi, \cdot)\phi + (\beta/\pi)[(\phi_+, \cdot) \phi_+ - (\phi_{-},\cdot)\phi_{-} ]$$
where $$\phi(x) = (\cosh x)^{-1}, \phi_{+} = \frac{\cosh(x/2)}{\cosh x}, \phi_{-} = \frac{\sinh(x/2)}{\cosh x}.$$
One sees $(\phi, \phi_{-}) = 0$ and thus $(\phi, i[f(P),g(Q)]\phi) > 0$ but 
$$ (\phi_{-}, i[f(P),g(Q)]\phi_-) = -(\beta/\pi)||\phi_-||^4 < 0.$$
The function $g$ is analytic in $S_{\pi/2}$ while $f$ is analytic in $S_{1/2}$.  The product is $(\pi/2)(1/2) = \pi/4$, half the number required for positivity if the Kato conjecture is true.  We also have $\text{Im}g(z) > 0$ for $0< \text{Im} z < \pi/2$ and $\text{Im}f(z)> 0$ for $0< \text {Im} z < 1/2$. But of course the commutator is neither positive nor negative.  We have 
$$ \tanh x = \int \tanh\hat r (x-t) d\mu(t)$$ and 
$$\tanh \alpha_1 \xi + \tanh \alpha_2 \xi = \int \tanh \hat r'(\xi - \eta) d\nu(\eta)$$
where $\mu$ and $\nu$ are finite positive measures (in fact $\mu$ is just a point mass of $1$ at $0$) and $\hat r = 1, \hat r' = \pi$ so that $rr' = \pi/4$.
\vspace{.2cm}

\section{We can assume f is entire and in some $K_a$}  
In this section we prove Theorem \ref{$K_a$ assumption} restated for convenience below:  
\begin{theorem} 
Suppose that the Kato conjecture is true for all entire $f$ and $g$ with $K\ge 0$ and with the additional constraint that $f\in K_{a'}$ and $g\in K_{b'}$ for some positive $a'$ and $b'$.  Then the Kato conjecture is true in general.
 \end{theorem}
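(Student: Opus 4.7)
The plan is to approximate $f,g$ by entire functions $f_\epsilon,g_\epsilon$ lying in some $K_\epsilon$ with a nonzero, nonnegative commutator, apply the hypothesis to extract tanh representations of the approximants, and pass to the limit $\epsilon\to 0$. By Theorem \ref{derivs} and Corollary \ref{positivederivs} I may assume $f$ and $g$ are absolutely continuous and monotone increasing with $f',g'>0$ a.e. For $\epsilon>0$ let $P_\epsilon(x)=\epsilon/[\pi(x^2+\epsilon^2)]$ be the Poisson kernel and $\psi_\epsilon(x)=(2\pi\epsilon^2)^{-1/2}e^{-x^2/(2\epsilon^2)}$ a Gaussian, and set $\rho_\epsilon=P_\epsilon*\psi_\epsilon$ (nonnegative, mass one), $f_\epsilon=f*\rho_\epsilon$, $g_\epsilon=g*\rho_\epsilon$.

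The two key properties are that $f_\epsilon,g_\epsilon$ are entire and lie in $K_\epsilon$. Entireness follows from $\rho_\epsilon(z)=\int P_\epsilon(s)\psi_\epsilon(z-s)\,ds$: the Gaussian decay of $\psi_\epsilon(z-s)$ in $s$ makes this integral entire in $z$, and the same formula applied to $f*\rho_\epsilon$ makes $f_\epsilon$ entire. For $K_\epsilon$-membership of $f*P_\epsilon$, the partial-fraction identity
\[
\frac{t-x}{D(t)}=\frac{1}{8\epsilon y}\frac{d}{dt}\log\frac{(x-t)^2+(\epsilon-y)^2}{(x-t)^2+(\epsilon+y)^2},\qquad D(t)=[(x-t)^2+(\epsilon+y)^2][(x-t)^2+(\epsilon-y)^2],
\]
combined with integration by parts (boundary terms vanish), yields
\[
\mathrm{Im}(f*P_\epsilon)(x+iy)=-\frac{1}{4\pi}\int f'(t)\log\frac{(x-t)^2+(\epsilon-y)^2}{(x-t)^2+(\epsilon+y)^2}\,dt,
\]
which has the sign of $y$ for $|y|<\epsilon$ since $f'\ge 0$ and the logarithm has sign opposite to $y$. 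Thus $f*P_\epsilon\in K_\epsilon$. In the strip $|\mathrm{Im}z|<\epsilon$, writing $f_\epsilon(z)=\int(f*P_\epsilon)(z-s)\psi_\epsilon(s)\,ds$ (which agrees with the entire extension by a Cauchy contour shift) and using $\psi_\epsilon\ge 0$ and $\mathrm{Im}(z-s)=\mathrm{Im}z$ preserves the Herglotz sign, so $f_\epsilon\in K_\epsilon$; likewise $g_\epsilon\in K_\epsilon$.

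Positivity of the regularized commutator is automatic: with $U_{t,s}=e^{-isP}e^{itQ}$ unitary and $f(P-t)=e^{itQ}f(P)e^{-itQ}$, $g(Q-s)=e^{-isP}g(Q)e^{isP}$, a direct computation gives
\[
i[f_\epsilon(P),g_\epsilon(Q)]=\iint\rho_\epsilon(t)\rho_\epsilon(s)\,U_{t,s}KU_{t,s}^{-1}\,dt\,ds\ge 0,
\]
and this is nonzero for small $\epsilon$ since $f_\epsilon\to f$, $g_\epsilon\to g$ boundedly pointwise a.e.\ and $K\ne 0$. Invoking the hypothesis on the entire pair $(f_\epsilon,g_\epsilon)$ gives (after a consistent sign choice)
\[
f_\epsilon(x)=\int\tanh\hat\alpha_\epsilon(x-s)\,d\mu_\epsilon(s)+d_\epsilon,\qquad g_\epsilon(x)=\int\tanh\alpha_\epsilon(x-s)\,d\nu_\epsilon(s)+d_\epsilon',
\]
with $\alpha_\epsilon\hat\alpha_\epsilon=\pi/2$ and finite positive Borel measures $\mu_\epsilon,\nu_\epsilon$.

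The main obstacle is then the limit $\epsilon\to 0$. Uniform bounds $\mu_\epsilon(\mathbb{R})=[f_\epsilon]/2\to[f]/2$ and $\nu_\epsilon(\mathbb{R})\to[g]/2$ give weak-$*$ compactness of $\mu_\epsilon,\nu_\epsilon$ on the two-point compactification of $\mathbb{R}$, following \cite{HK}. Since $\tanh\hat\alpha_\epsilon(x-s)\to 0$ uniformly as $\hat\alpha_\epsilon\to 0$ and $\mu_\epsilon(\mathbb{R})$ is bounded, $\hat\alpha_\epsilon\to 0$ would force $f_\epsilon\to$ constant, contradicting non-constancy of $f$; the symmetric argument bounds $\alpha_\epsilon$ below. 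Combined with $\alpha_\epsilon\hat\alpha_\epsilon=\pi/2$, both parameters are confined to a compact subset of $(0,\infty)$. Making this last step quantitatively rigorous (so that, after extracting a subsequence, the weak limits combine with a limiting parameter $\alpha$ to give the Kato representations of $f$ and $g$ by pointwise convergence) is the delicate technical point, but it parallels the compactness argument already carried out in Section~4.
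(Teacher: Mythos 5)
Your overall strategy matches the paper's: mollify $f$ and $g$ to produce entire approximants lying in some $K_a$, check that the commutator stays nonnegative, invoke the hypothesis to get $\tanh$-representations with correlated strip widths, and pass to the limit via weak-$*$ compactness of the measures on the two-point compactification of $\mathbb{R}$. The differences are technical but worth noting. First, for the mollifier: you use the Poisson kernel and verify $f*P_\epsilon\in K_\epsilon$ by a direct computation with the partial-fraction identity; the paper instead uses $\phi_\epsilon(t)=(2\epsilon)^{-1}\cosh^{-2}(t/\epsilon)$, for which a single integration by parts immediately produces the $\tanh\epsilon^{-1}(\cdot)$ representation with $d\mu=\tfrac12 f'\,dt$, so that $f*\phi_\epsilon\in K_{\pi\epsilon/2}$ drops out at once. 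Your Poisson computation is correct, but the $\cosh^{-2}$ choice is tailor-made for the $K_a$ class and avoids the auxiliary identity entirely. Second, and more substantively, the paper mollifies $f$ and $g$ with four \emph{independent} parameters $(\epsilon,\alpha)$ and $(\delta,\beta)$ and then iterates the limits one at a time. This is not cosmetic: when $\alpha\to 0$ with $\epsilon,\delta,\beta$ fixed, the function $g_{\delta,\beta}$ is literally unchanged, so the strip parameter $b$ assigned to it is fixed and $a=\pi/(2b)$ does not move — the compactness of the strip-width parameter is then automatic. In your joint limit both functions change simultaneously, so you must argue directly (as you sketch) that $\hat\alpha_\epsilon$ and $\alpha_\epsilon$ are bounded away from $0$ and $\infty$; your $\hat\alpha_\epsilon\to 0 \Rightarrow f_\epsilon\to\text{const}$ observation is correct (via $\|f_\epsilon'\|_\infty\le\hat\alpha_\epsilon\,\mu_\epsilon(\mathbb{R})\to 0$ against pointwise convergence to the non-constant $f$), but the step you flag as ``the delicate technical point'' is precisely what the paper's staged regularization is designed to make painless. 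So: same skeleton, but the paper's two clever choices ($\cosh^{-2}$ mollifier and four-parameter staging) remove most of the work you deferred.
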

\begin{proof}
Suppose $K =i[f(P),g(Q)] \ge 0, K\ne 0$ where $f$  and $g$  are real, bounded, and measurable. We know now that $f$ and $g$ are absolutely continuous with derivatives which we can assume positive a.e.  
Let $\phi_\epsilon(t) = (2\epsilon)^{-1} \cosh^{-2}(t/\epsilon)$.  Then $\int \phi_\epsilon(t) dt = 1$.  Define $$f_\epsilon = \phi_\epsilon * f.$$ 
We claim $f_\epsilon \in K_{\pi\epsilon/2}.$ To see this note 
$$\phi_\epsilon * f(t) = -\int f(s) (d/ds)(1/2)\tanh\epsilon^{-1}(t-s)ds .$$
Integrating by parts we have
$$f_\epsilon(t) = \int \tanh\epsilon^{-1}(t-s) d\mu(s) + c$$
where $d\mu(s) =(1/2) f'(s) ds$ and $c= (f(\infty) +f(-\infty))/2.$  Thus $f_\epsilon \in K_{\pi \epsilon/2}$. Similarly we define $g_\delta = \phi_\delta * g$ so that $g_\delta \in K_{\pi \delta/2}.$ 
Let $\psi(t) = (2\pi)^{-1/2} e^{-t^2/2}$ and $\psi_\alpha(t) = \alpha^{-1} \psi(t/\alpha)$.  Define $f_{\epsilon,\alpha} = \psi_{\alpha}*f_\epsilon$.  Then as can be seen by the commutativity of the $*$ operation, $f_{\epsilon,\alpha} \in K_{\pi\epsilon/2}$ and entire. Similarly $g_{\delta,\beta} = \psi_{\beta}* g_\delta \in K_{\pi\delta/2 }  $ and entire. 
By assumption the Kato conjecture is true for $f_{\epsilon,\alpha}$ and $g_{\delta,\beta} $.  (Note that of course $i[f_{\epsilon,\alpha}(P),g_{\delta,\beta}(Q)] \ge 0$.)
Then we know that $f_{\epsilon,\alpha} \in K_a$ and $g_{\delta,\beta} \in K_b$ for some $a>0$ and $b>0$ with $ab=\pi/2$.  Let us fix $\epsilon, \delta, \beta$ and  and let $\alpha \rightarrow 0 $. We have $ab=a(\alpha) b = \pi/2$ so that in fact $a$ is fixed.
We have 
$$f_{\epsilon,\alpha}(t) = \int \tanh\hat a(t-s) d\mu_{\epsilon,\alpha}(s) + c_{\epsilon,\alpha}.$$
Note that for each $\epsilon$ and $\alpha$, $\lim_{t\to \pm\infty} f_{\epsilon,\alpha}(t) = f(\pm\infty)$ and thus $f(\pm\infty) = \pm \mu_{\epsilon,\alpha}(\mathbb{R}) + c_{\epsilon,\alpha}.$  In particular $c_{\epsilon,\alpha}$ and $\mu_{\epsilon,\alpha}(\mathbb{R})$ do not depend on $\alpha$. We compactify $\mathbb{R}$ by adding the points $\pm \infty$ and we set $\mu_{\epsilon,\alpha} (\{\pm \infty\}) = 0.$  Then there is a measure $\mu_\epsilon$, a weak $*$  limit of the $\mu_{\epsilon,\alpha}$ as $\alpha \rightarrow 0$, and a constant $c = c_{\epsilon,\alpha}$ so that
$$f_\epsilon(t) = \int_{\mathbb{R}}\tanh \hat a(t-s) d\mu_\epsilon(s) +  d_\epsilon $$ where $d_\epsilon = \mu_\epsilon(\{-\infty\}) - \mu_\epsilon(\{\infty\})+ c.$

Thus $f_\epsilon\in K_a$.  Similarly $g_\delta \in K_b$.  The same analysis allows us to take the limit $\epsilon \rightarrow 0$ and the limit $\delta \rightarrow 0$ so that $f\in K_a$ and $g\in K_b$ and Kato's conjecture is proved.
\end{proof}

\end{document}